\documentclass[a4paper,11pt]{article}
\usepackage[utf8]{inputenc}
\usepackage[T1]{fontenc}
\usepackage[english]{babel}
\usepackage{amsmath,amssymb,amsthm}
\usepackage[short]{optidef}				% Package for linear programming
\usepackage[shortlabels]{enumitem} 
\usepackage{comment}
\usepackage{standalone}
\usepackage{mathtools}
\usepackage{fullpage}
\usepackage[dvipsnames]{xcolor}
\usepackage{float}
\usepackage{mathdots}
\usepackage{tikz,subcaption}
\usepackage[subrefformat=parens,labelformat=parens]{subcaption}

\graphicspath{{./Figuras/}} %%%% path for figures
\usetikzlibrary{graphs,graphs.standard}
\usepackage{hyperref}
\usepackage{thmtools}
\usepackage{thm-restate}
\tikzstyle{line} = [draw, thick]

\usepackage{graphicx}

\usepackage{tikz}
\usetikzlibrary{calc,positioning,decorations.pathmorphing,decorations.pathreplacing}
\providecommand{\tikzsetnextfilename}[1]{}

\tikzstyle{selected edge} = [draw,line width=5pt,-,red!50]
\tikzstyle{vertex}=[circle,draw,fill=white,minimum size=10pt,inner sep=2pt]
\tikzstyle{black vertex}=[circle,draw,fill=black,minimum size=10pt,inner sep=2pt]
\tikzstyle{red vertex}=[circle,draw,fill=red!90,minimum size=10pt,inner sep=2pt]
\tikzstyle{gray vertex}=[circle,draw,fill=gray!90,minimum size=20pt,inner sep=1pt]
\tikzstyle{trivial}=[circle,draw,fill=black,minimum size=7pt,inner sep=1pt]
\tikzstyle{dummy}=[circle,minimum size=5pt,inner sep=0pt]
\tikzstyle{red edge} = [draw,very thick,-,red!80,decorate,decoration={snake, amplitude=.8pt, segment length=4pt}]
\tikzstyle{purple edge} = [draw,very thick,-,magenta]
\tikzstyle{blue edge} = [draw,very thick,-,blue!60]
\tikzstyle{green edge} = [draw,very thick,-,green!90]
\tikzstyle{brown edge} = [draw,thick,-,brown!90]
\tikzstyle{black edge} = [draw,thick,-,black!90]
\tikzstyle{dot} = [circle,inner sep=1pt,fill,name=#1]
\tikzstyle{extended line} = [shorten >=-#1,shorten <=-#1]
\usetikzlibrary{arrows.meta}

\definecolor{TikzVertex0}{HTML}{ec706b}
\definecolor{TikzVertex1}{HTML}{f09033}
\definecolor{TikzVertex2}{HTML}{f7d453}
\definecolor{TikzVertex3}{HTML}{dd74fa}
\definecolor{TikzVertex4}{HTML}{99eeae}
\definecolor{TikzVertex5}{HTML}{e9415a}
\definecolor{TikzVertex6}{HTML}{4fc55f}
\definecolor{TikzVertex7}{HTML}{45b5d7}
\definecolor{TikzVertex8}{HTML}{2867a4}
\definecolor{TikzVertex9}{HTML}{7918d3}

\definecolor{myblue}{RGB}{80,80,160}
\definecolor{mygreen}{RGB}{80,160,80}
\definecolor{myred}{RGB}{255,0,0}
\definecolor{mybrown}{RGB}{139,69,19}

\newtheorem{theorem}             {Theorem}
\newtheorem{lemma}     	[theorem] {Lemma}

\newtheorem{proposition}[theorem] {Proposition}

\newtheorem{observation}[theorem] {Observation}
   
\newtheorem{corollary}	[theorem] {Corollary}

\newtheoremstyle{case}{}{}{}{}{\bfseries}{:}{ }{}
\theoremstyle{case}

\numberwithin{subcase}{case}

\begin{document}

\title{Dynamic Coloring and Graph Squares of Regular Graphs}

\author{
Juan Gutierrez\thanks{This work was partially supported by Fondo Semilla UTEC 2025.}
\and
Grover Ugarte\footnotemark[1]
}
\date{}
\maketitle

\begin{center}
\footnotesize
Department of Computer Science\\
University of Engineering and Technology (UTEC), Lima, Peru\\
E-mail: \texttt{\{jgutierreza,grover.ugarte\}@utec.edu.pe}
\end{center}

\begin{abstract}
An $r$-dynamic coloring of a graph $G$ is a proper vertex coloring in
which every vertex $v$ sees at least $\min\{r,d(v)\}$ distinct colors
in its neighborhood. 
The minimum number of colors in such a
coloring is the $r$-dynamic chromatic number $\chi_r^d(G)$.
We study dynamic colorings of regular graphs. A straighforward observation shows that  $\chi_r^d(G)=\chi(G^2)$, for any $r$-regular graph $G$.
We prove that
$
\chi_3^d(G)=\chi(G^2)\le6
$
for every claw-free cubic graph $G$, and the bound is sharp.
For Hamiltonian claw-free cubic graphs, we improve the bound to $5$
apart from four explicit exceptions.
We also determine exactly the $r$-dynamic chromatic number of the
$4$-regular circulant graph $C_p(1,3)$ for each
$r\in\{2,3,4\}$. In particular, the case $r=4$ determines
$\chi(C^2_p(1,3))$.
\end{abstract}
\section{Introduction}

Throughout the paper, all graphs are finite, simple, and undirected,
and we use standard terminology and notation
\cite{bondy_graph_2008,diestel_graph_2017}.
For a vertex $v$ of a graph $G$, let $N_G(v)$ and $d_G(v)$ denote its
neighborhood and degree, respectively.
For a positive integer $r$, an \emph{$r$-dynamic coloring} of a graph
$G$ is a proper vertex coloring $c$ such that
\[
|c(N_G(v))|\ge \min\{r,d_G(v)\}
\]
for every $v\in V(G)$. The minimum number of colors in such a coloring
is the \emph{$r$-dynamic chromatic number} of $G$, denoted by
$\chi_r^d(G)$. Dynamic coloring, corresponding to $r=2$, was introduced
by Montgomery~\cite{montgomery2001dynamic}. Higher-order versions have
subsequently been studied under the names conditional coloring,
$r$-hued coloring, and $r$-dynamic coloring; see
\cite{CHEN202224,LAI20061997}.

Dynamic coloring provides a natural interpolation between ordinary
coloring and coloring the square of a graph. Recall that $G^2$ has
vertex set $V(G)$, with two distinct vertices adjacent whenever their
distance in $G$ is at most two. For every graph $G$,
\begin{equation}\label{eq:absolute-bounds}
\chi(G)=\chi_1^d(G)
\le \chi_2^d(G)
\le\cdots\le
\chi_{\Delta(G)}^d(G)
=\chi(G^2).
\end{equation}
In particular, if $G$ is $d$-regular, then
\begin{equation}\label{eq:regular-square}
\chi_r^d(G)=\chi(G^2).
\end{equation}
Thus, on regular graphs, the last member of the dynamic-coloring
hierarchy is precisely the classical square-coloring problem.

Both dynamic coloring and coloring graph squares have received
considerable attention for graphs of bounded degree. We write
$\text{ch}(G)$ for the list chromatic number of $G$.
Cranston and Kim~\cite{CranstonKim2008} proved that every connected
subcubic graph other than the Petersen graph satisfies
$
\text{ch}(G^2)\le8,
$
and that the bound is best possible. For planar cubic graphs,
Thomassen~\cite{Thomassen2018} proved the sharp bound
$
\chi(G^2)\le7.
$
Square coloring has also been studied under structural restrictions.
In particular, Cames van Batenburg and Kang
\cite{CamesVanBatenburgKang2019} proved that every claw-free graph
with clique number three satisfies
$
\chi(G^2)\le10,
$
and further bounds for squares of claw-free graphs were obtained in
\cite{DeJoannisKangPastor2019}.

The regular setting is also natural from the dynamic-coloring
perspective. Montgomery conjectured that every regular graph satisfies
$
\chi_2^d(G)\le\chi(G)+2.
$
Bowler et al.~\cite{BowlerEtAl2017} disproved this conjecture by showing
that for every integer $k\ge2$ there exists a regular graph $G$ with
$
\chi(G)=k
\qquad\text{and}\qquad
\chi_2^d(G)=2k.
$
Thus, regularity alone does not force the dynamic chromatic number to
remain close to the ordinary chromatic number, motivating the study of
structured classes of regular graphs.

Our first main result concerns claw-free cubic graphs. For such graphs,
the $3$-dynamic coloring problem is exactly the square-coloring problem
by~\eqref{eq:regular-square}. Li and Lai~\cite{LiLai2017} proved that
every claw-free graph satisfies
$
\chi_3^d(G)\le \max\{\chi(G)+3,7\},
$
which yields the upper bound $7$ for claw-free cubic graphs. We improve
this bound to $6$, and the improvement is best possible.

\begin{itemize}
    \item 
If $G$ is a claw-free cubic graph, then
$
\chi(G^2)=\chi_3^d(G)\le6.
$
Moreover, the bound is sharp. (Theorem \ref{thm:clawfreeleq6}).
\end{itemize}
%\begin{theorem}\label{thm:clawfree-cubic}
%If $G$ is a claw-free cubic graph, then
%$
%\chi(G^2)=\chi_3^d(G)\le6.
%$
%Moreover, the bound is sharp.
%\end{theorem}

Indeed, the triangular prism $C_3\square K_2$ has diameter two, and
hence its square is $K_6$.
The proof of Theorem \ref{thm:clawfreeleq6} combines the structure of claw-free cubic
graphs with incidence colorings of subcubic multigraphs. We then exploit
the structure more strongly in the Hamiltonian case and reduce the
bound by one, apart from four explicit graphs (to be defined later).

%\begin{theorem}\label{thm:clawfree-hamiltonian}
%Let $G$ be a Hamiltonian claw-free cubic graph. If $ G\notin \{C_3\square K_2,X_{12},X_{16}^{1},X_{16}^{2}\}, $ then
%$
%\chi(G^2)=\chi_3^d(G)\le5.
%$
%\end{theorem}

\begin{itemize}
    \item If $G$ is a Hamiltonian claw-free cubic graph then
$
\chi(G^2)=\chi_3^d(G)\le5$, except
for four particular graphs. (Theorem \ref{thm:ham-clawfree}).
\end{itemize}

For completeness, observe the case $r=2$ is immediate for claw-free cubic graphs. The neighborhood of every vertex contains an edge, and hence
every proper coloring is automatically $2$-dynamic. Consequently,
$
\chi_2^d(G)=\chi(G).
$

Our second direction is to determine the complete dynamic-coloring
hierarchy for a natural family of $4$-regular graphs. For positive
integers $p,n_1,\ldots,n_k$, the circulant graph
$C_p(n_1,\ldots,n_k)$ has vertex set $\mathbb Z_p$, with $i$ adjacent
to $i\pm n_j$ for every $j$. We focus on $C_p(1,3)$ and determine
$\chi_r^d(C_p(1,3))$ for every $r\in\{2,3,4\}$.

The following hold (Theorems
\ref{thm:circulant-r2},     
\ref{thm:circulant-r3}, and \ref{thm:circulant-r4}):
\begin{itemize}
 \item If $p\ge6$, then
\[
\chi_2^d(C_p(1,3))=
\begin{cases}
3,&9\mid p,\\
4,&\text{otherwise}.
\end{cases}
\]

\item If $p\ge12$, then
\[
\chi_3^d(C_p(1,3))=
\begin{cases}
4,&12\mid p,\\
5,&\text{otherwise}.
\end{cases}
\]

\item If $p\ge12$, then
\[
\chi_4^d(C_p(1,3))=
\begin{cases}
5,&5\mid p,\\
8,&p=16,\\
7,&p\in\{13,14,19,21,26,31\},\\
6,&\text{otherwise}.
\end{cases}
\]
\end{itemize}

Since $C_p(1,3)$ is $4$-regular for $p\ge12$, the last part also
determines the chromatic number of its square. In fact,
$
C^2_p(1,3)=C_p(1,2,3,4,6).
$

%Thus, the two parts of the paper address complementary aspects of dynamic coloring in low-degree regular graphs: sharp square-coloring bounds for a structural class of cubic graphs, and an exact determination of the entire dynamic-coloring hierarchy for the circulant family $C_p(1,3)$.

\section{Claw-free cubic graphs}

In this section we study $3$-dynamic colorings of claw-free cubic graphs.
A \emph{diamond} is an induced subgraph isomorphic to $K_4 - e$. 
We say that two diamonds are \emph{adjacent} if there exists an edge joining a degree-$2$ vertex of one diamond to a degree-$2$ vertex of the other. 
Contracting each diamond to a single vertex and preserving these adjacencies yields an auxiliary graph; if this graph is a path, we call the corresponding subgraph a \emph{string of diamonds}, and if it is a cycle, we call it a \emph{ring of diamonds}.

We have the next observation, whose proof is direct by inspection
(Figure~\ref{fig:cubic-claw-free-ring-string}a).

\begin{observation}\label{obs:ringX3leq5}
    If $G$ is a ring of diamonds then $\chi^d_3(G) \leq 5$.
\end{observation}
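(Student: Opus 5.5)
The plan is to give an explicit coloring of $G^2$, which suffices because $\chi_3^d(G)=\chi(G^2)$ for cubic $G$ by~\eqref{eq:regular-square}. First I fix notation for the ring. Let the diamonds be $D_1,\dots,D_k$ in cyclic order, with $k\ge 2$. In $D_i$, let $b_i,c_i$ be the two degree-$3$ vertices (adjacent to each other and to everything in $D_i$), and let $a_i,d_i$ be the two nonadjacent degree-$2$ vertices. Label them so that the ring edges are $d_ia_{i+1}$, indices modulo $k$. Since every vertex then has degree $3$, these are all the edges of $G$.

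Next I list every pair of vertices at distance at most two. Within $D_i$, all four vertices are pairwise at distance at most $2$; for instance $a_i$ and $d_i$ are joined through $b_i$. Across consecutive diamonds, $d_i$ is adjacent to $a_{i+1}$. Moreover, through the edge $d_ia_{i+1}$, the vertex $d_i$ is at distance $2$ from $b_{i+1}$ and $c_{i+1}$, and $a_{i+1}$ is at distance $2$ from $b_i$ and $c_i$. I claim no other pairs are at distance $\le 2$. To verify this, I check that pairs such as $a_i,a_{i+1}$ or $b_i,b_{i+1}$ are at distance $3$, e.g.\ $a_i b_i d_i a_{i+1}$. The case $k=2$ needs a separate short look, since both ring edges $d_1a_2$ and $d_2a_1$ then join the same two diamonds. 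Even there the only new short paths use these two edges, and $a_1,a_2$ are still at distance $3$.

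Then I define the coloring $c(b_i)=1$, $c(c_i)=2$, $c(a_i)=3$, $c(d_i)=4$ for all $i$. I check it against the list of pairs above:
\begin{itemize}
\item Each diamond receives four distinct colors.
\item $d_i$ has color $4$, while $a_{i+1},b_{i+1},c_{i+1}$ have colors $3,1,2$.
\item $a_{i+1}$ has color $3$, while $b_i,c_i,d_i$ have colors $1,2,4$.
\end{itemize}
So this is a proper coloring of $G^2$ with $4\le 5$ colors, which proves the observation (in fact with one color to spare).

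The only step that needs real care is the claim that the list of distance-$\le 2$ pairs is complete, especially the degenerate case $k=2$. I would handle it by writing out the neighborhoods $N(a_i)=\{b_i,c_i,d_{i-1}\}$ and $N(d_i)=\{b_i,c_i,a_{i+1}\}$ (and $N(b_i)$, $N(c_i)$ inside $D_i$). Any two vertices at distance $\le 2$ are then either adjacent or share one of these neighbors, so all such pairs can be read off from these neighborhoods.
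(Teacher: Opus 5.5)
Your proof is correct and is the same as the paper's: the coloring in Figure~\ref{fig:cubic-claw-free-ring-string}a is yours up to renaming. There the degree-$3$ vertices of each diamond get two fixed colors, the degree-$2$ vertices get two further colors oriented consistently around the ring so that every ring edge joins them, and so only four colors are used. The paper leaves the verification to inspection of the figure, while your closed-neighborhood check, including the degenerate case $k=2$, makes that inspection explicit for rings of arbitrary length.
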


\begin{figure}[H]
    \centering
    \setcounter{subfigure}{0}

    \begin{subfigure}[b]{0.48\textwidth}
        \centering
        % \tikzexternaldisable

\tikzsetnextfilename{cubic-claw-free-ring}
\begin{tikzpicture}[scale=0.5, transform shape]
\centering
    \begin{scope}[yshift=1cm]
    \node[vertex,label=below:$0$] (A0) {};
    \node[vertex,label=below:$3$] (A1) at ($(A0)+(4,0)$) {};
    \node[vertex, draw=black, thick,label=above:$1$] (A2) at ($(A0)+(2,1)$) {};
    \node[vertex,label=below:$2$] (A3) at ($(A0)+(2,-1)$) {};
    \foreach \source/\dest in {A0/A2,A0/A3,A1/A2,A1/A3,A2/A3}
        \path[black edge] (\source) -- (\dest);
    \end{scope}

    \draw[black edge] (A2) -- (A3);

    \begin{scope}[xshift=5cm,yshift=1cm,rotate=45]
    \node[vertex,label=below:$0$] (B0) {};
    \node[vertex,label=below:$3$] (B1) at ($(B0)+(4,0)$) {};
    \node[vertex,draw=black,thick,label=above:$1$] (B2) at ($(B0)+(2,1)$) {};
    \node[vertex,label=below:$2$] (B3) at ($(B0)+(2,-1)$) {};
    \foreach \source/\dest in {B0/B2,B0/B3,B1/B2,B1/B3,B2/B3}
        \path[black edge] (\source) -- (\dest);
    \end{scope}

    \draw[black edge] (B2) -- (B3);

    \begin{scope}[xshift=7.8cm,yshift=5cm,rotate=135]
    \node[vertex,label=below:$0$] (C0) {};
    \node[vertex,label=below:$3$] (C1) at ($(C0)+(4,0)$) {};
    \node[vertex,draw=black,thick,label=above:$1$] (C2) at ($(C0)+(2,1)$) {};
    \node[vertex,label=below:$2$] (C3) at ($(C0)+(2,-1)$) {};
    \foreach \source/\dest in {C0/C2,C0/C3,C1/C2,C1/C3,C2/C3}
        \path[black edge] (\source) -- (\dest);
    \end{scope}

    \draw[black edge] (C2) -- (C3);

    \begin{scope}[yshift=7.85cm]
    \node[vertex,label=below:$3$] (D0) {};
    \node[vertex,label=below:$0$] (D1) at ($(D0)+(4,0)$) {};
    \node[vertex,label=above:$2$] (D2) at ($(D0)+(2,1)$) {};
    \node[vertex,draw=black,thick,label=below:$1$] (D3) at ($(D0)+(2,-1)$) {};
    \foreach \source/\dest in {D0/D2,D0/D3,D1/D2,D1/D3,D2/D3}
        \path[black edge] (\source) -- (\dest);
    \end{scope}

    \draw[black edge] (D2) -- (D3);

    \begin{scope}[xshift=-4cm,yshift=3.85cm,rotate=-45]
    \node[vertex,label=below:$0$] (F0) {};
    \node[vertex,label=below:$3$] (F1) at ($(F0)+(4,0)$) {};
    \node[vertex,draw=black,thick,label=above:$1$] (F2) at ($(F0)+(2,1)$) {};
    \node[vertex,label=below:$2$] (F3) at ($(F0)+(2,-1)$) {};
    \foreach \source/\dest in {F0/F2,F0/F3,F1/F2,F1/F3,F2/F3}
        \path[black edge] (\source) -- (\dest);
    \end{scope}

    \draw[black edge] (F2) -- (F3);

    \begin{scope}[xshift=-4cm,yshift=5cm,rotate=45]
    \node[vertex,label=below:$3$] (E0) {};
    \node[vertex,label=below:$0$] (E1) at ($(E0)+(4,0)$) {};
    \node[vertex,label=above:$2$] (E2) at ($(E0)+(2,1)$) {};
    \node[vertex,draw=black,thick,label=below:$1$] (E3) at ($(E0)+(2,-1)$) {};
    \foreach \source/\dest in {E0/E2,E0/E3,E1/E2,E1/E3,E2/E3}
        \path[black edge] (\source) -- (\dest);
    \end{scope}

    \draw[black edge] (E2) -- (E3);

    % Aristas entre los diamantes
    \draw[black edge] (A0) -- (F1);
    \draw[black edge] (D0) -- (E1);
    \draw[black edge] (E0) -- (F0);
    \draw[black edge] (C1) -- (D1);
    \draw[black edge] (B1) -- (C0);
    \draw[black edge] (A1) -- (B0);

\end{tikzpicture}
        \caption{$ $}
        \label{fig:cubic-claw-free-ring}
    \end{subfigure}
    \hfill
    \begin{subfigure}[b]{0.48\textwidth}
        \centering
        % \tikzexternaldisable

\tikzsetnextfilename{fig-cubic-string}
\begin{tikzpicture}[scale=0.75,
    vertex/.style={circle, draw, fill=black, inner sep=2pt},
    redvertex/.style={circle, draw, fill=red, inner sep=2pt},
    edge/.style={draw, thick}
]

% (mismo dibujo)
\node[vertex, label=below:$u (0)$] (u) at (0,0) {};
\node[vertex,label=below:$(3)$] (a1) at (1,0) {};
\draw[edge] (u) -- (a1);

\node[vertex, label=above:$(1)$] (t1) at (2,1) {};
\node[vertex,label=below:$(2)$] (b1) at (2,-0.9) {};
\node[vertex,label=below:$(0)$] (c1) at (3,0) {};
\draw[edge] (a1) -- (t1);
\draw[edge] (a1) -- (b1);
\draw[edge] (b1) -- (c1);
\draw[edge] (c1) -- (t1);
\draw[edge] (t1) -- (b1);

\node[vertex,label=below:$(3)$] (a2) at (4,0) {};
\draw[edge] (c1) -- (a2);

\node[vertex, label=above:$(1)$] (t2) at (5,1) {};
\node[vertex,label=below:$(2)$] (b2) at (5,-0.9) {};
\node[vertex,label=below:$(0)$] (c2) at (6,0) {};
\draw[edge] (a2) -- (t2);
\draw[edge] (a2) -- (b2);
\draw[edge] (b2) -- (c2);
\draw[edge] (c2) -- (t2);
\draw[edge] (t2) -- (b2);

\node[vertex,label=below:$(3)$] (a3) at (7,0) {};
\draw[edge] (c2) -- (a3);

\node[vertex, label=above:$(1)$] (t3) at (8,1) {};
\node[vertex, label=below:$(2)$] (b3) at (8,-0.9) {};
\node[vertex, label=below:$(0)$] (c3) at (9,0) {};
\draw[edge] (a3) -- (t3);
\draw[edge] (a3) -- (b3);
\draw[edge] (b3) -- (c3);
\draw[edge] (c3) -- (t3);
\draw[edge] (t3) -- (b3);

\node[vertex, label=below:$v (3)$] (v) at (10,0) {};
\draw[edge] (c3) -- (v);
\end{tikzpicture}
% \caption{$ $.}
        \caption{$ $}
        \label{fig:cubic-claw-free-string}
    \end{subfigure}

    \caption{A ring of diamonds (a) and a string of diamonds (b). In both cases, the assigned colors apear in parenthesis.}
    \label{fig:cubic-claw-free-ring-string}
\end{figure}

\begin{comment}
\begin{proposition}\label{prop:chi3Gleqchi3F}
    Let $F$ be a graph with $\Delta(F)=3$ and let $G$ be the graph obtained by replacing an edge $uv$ of $F$ with a string of diamonds.
    Then $\chi^d_3(G)  \leq \chi^d_3(F)$.
\end{proposition}
\begin{proof}
Let $\phi$ be a $3$-dynamic coloring of $F$ using $\chi_3^d(F)$ colors.
Since $\Delta(F)=3$, we have $\chi_3^d(F)\ge 4$. After relabeling colors, assume
$\phi(u)=0$ and $\phi(v)=3$.
For each diamond in the string replacing $uv$, color its two degree-$3$
vertices with $1$ and $2$, and its two degree-$2$ vertices with $3$ and $0$,
oriented so that the vertex closer to $u$ receives $3$ and the one closer to
$v$ receives $0$.

Then every vertex of the string sees the required number of distinct colors.
Moreover, at $u$ the deleted neighbor $v$ of color $3$ is replaced by a
neighbor of color $3$, and similarly at $v$ the deleted neighbor $u$ of color
$0$ is replaced by a neighbor of color $0$. All other vertices keep the same
neighborhood. Hence $\phi$ extends to a $3$-dynamic coloring of $G$ using no
new colors. Therefore,
$
\chi_3^d(G)\le \chi_3^d(F).
$
\end{proof}
\end{comment}

\begin{proposition}\label{prop:chi3Gleqchi3F}
Let $F$ be a graph with $\Delta(F)=3$.

\begin{enumerate}
    \item If $G$ is obtained from $F$ by replacing an edge $uv$ with a
    string of diamonds, then
    ${\chi_3^d(G)\leq \chi_3^d(F).}$
    \item Let $u$ and $v$ be adjacent vertices of degree two in $F$
    having a common neighbor $w$. If $G$ is obtained from $F$ by adding
    a nonempty string of diamonds joining $u$ and $v$, then
    $
        \chi_3^d(G)\leq \max\{\chi_3^d(F),6\}.
    $
\end{enumerate}
\end{proposition}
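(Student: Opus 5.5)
For the first part, I would extend an optimal coloring of $F$ to the new diamonds without adding colors. Let $\phi$ be a $3$-dynamic coloring of $F$ with $\chi_3^d(F)$ colors. Since $\Delta(F)=3$, some vertex together with its three neighbors needs four distinct colors, so $\chi_3^d(F)\ge4$. Since $\phi$ is proper, $\phi(u)\neq\phi(v)$. After relabeling we may assume $\phi(u)=0$, $\phi(v)=3$, and that colors $1,2$ exist.

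Each diamond in the string is colored as in Figure~\ref{fig:cubic-claw-free-ring-string}b:
\begin{itemize}
\item its two degree-$3$ vertices get $1$ and $2$;
\item its degree-$2$ vertex nearer $u$ gets $3$, and the one nearer $v$ gets $0$.
\end{itemize}
The checks are then local.
\begin{itemize}
\item Every degree-$3$ vertex of a diamond sees $\{0,3,\text{the other of }1,2\}$.
\item An end vertex colored $3$ sees $0$ (from $u$ or from the previous diamond) together with $1$ and $2$.
\item An end vertex colored $0$ sees $1$, $2$ and $3$.
\item Consecutive diamonds are joined by a $0$--$3$ edge, so the coloring stays proper.
\item At $u$, the lost neighbor $v$ of color $3$ is replaced by a neighbor of color $3$. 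At $v$, the lost neighbor $u$ of color $0$ is replaced by a neighbor of color $0$.
\end{itemize}
So the multisets of neighbor colors of $u$ and $v$ are unchanged, and every other old vertex keeps its neighborhood. Hence $\chi_3^d(G)\le\chi_3^d(F)$.

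For the second part, I would first handle a single diamond and then reduce the general case to the first part. Let $\phi$ be a $3$-dynamic coloring of $F$ with at most $k=\max\{\chi_3^d(F),6\}$ colors. The vertices $u,v,w$ form a triangle, so after relabeling $\phi(u)=0$, $\phi(v)=1$, $\phi(w)=2$, and colors $3,4,5$ are available.

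Let $F'$ be $F$ plus one diamond with end vertices $a,b$ and middle vertices $p,q$, where $a$ is joined to $u$ and $b$ to $v$. Color $a=3$, $b=4$, $p=2$, $q=5$, and check:
\begin{itemize}
\item $u$ sees $\{1,2,3\}$ and $v$ sees $\{0,2,4\}$, which are now the full neighborhoods and are rainbow.
\item $a$ sees $\{0,2,5\}$ and $b$ sees $\{1,2,5\}$.
\item $p$ sees $\{3,4,5\}$ and $q$ sees $\{3,4,2\}$.
\item All new edges are properly colored; for instance $au$ is $3$--$0$ and $bv$ is $4$--$1$.
\item All other vertices of $F$, including $w$, keep their neighborhoods.
\end{itemize}
Thus $\chi_3^d(F')\le k$ and $\Delta(F')=3$.

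A longer string joining $u$ and $v$ is obtained from $F'$ by replacing the edge $bv$ with a string of diamonds. By the first part, $\chi_3^d(G)\le\chi_3^d(F')\le\max\{\chi_3^d(F),6\}$.

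The main obstacle is finding a valid coloring of the single diamond. Its end vertices must avoid the triangle colors $\{0,1,2\}$ so that $u$ and $v$ become rainbow, and the middle vertices must avoid the colors that their neighbors $a$ and $b$ already see. The explicit assignment above settles this with only six colors. The reduction to the first part then disposes of strings of any length.
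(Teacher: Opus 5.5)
Your proposal is correct and follows the paper's proof. Part 1 uses the identical coloring: the degree-$3$ diamond vertices get $1,2$, and the ends get $3$ near $u$ and $0$ near $v$. Part 2 uses the same per-diamond coloring ($3$ at the end near $u$, $2$ and $5$ in the middle, $4$ at the end near $v$). The only difference is how you handle longer strings: the paper repeats this pattern along the whole string, joining consecutive diamonds by $4$--$3$ edges, whereas you color one diamond and then apply part 1 to the edge $bv$. That reduction is equally valid, since $\Delta(F')=3$.
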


\begin{proof}
Let $\phi$ be a $3$-dynamic coloring of $F$.

For the first statement, since $\Delta(F)=3$, at least four colors are
available. Assume
$
\phi(u)=0
\qquad\text{and}\qquad
\phi(v)=3.
$
For each diamond in the string replacing $uv$, color its two degree-$3$
vertices with $1$ and $2$, and its two degree-$2$ vertices with $3$ and
$0$, oriented so that the vertex closer to $u$ receives $3$ and the
vertex closer to $v$ receives $0$.
(Figure \ref{fig:cubic-claw-free-ring-string}b).
Then every vertex of the string sees the required number of distinct
colors. Moreover, at $u$ the deleted neighbor $v$ of color $3$ is
replaced by a neighbor of color $3$, and similarly at $v$ the deleted
neighbor $u$ of color $0$ is replaced by a neighbor of color $0$.
Thus $\phi$ extends to a $3$-dynamic coloring of $G$ using no new
colors, and
$
\chi_3^d(G)\leq\chi_3^d(F).
$

For the second statement, 
%use a palette of $\max\{\chi_3^d(F),6\}$ colors. 
since $u,v,w$ induce a triangle,
we may assume
$
\phi(u)=0,\qquad
\phi(v)=1,\qquad
\phi(w)=2.
$
%Choose three further colors $3,4,5$.
Orient the string from $u$ to $v$. In every diamond, color its two
degree-$3$ vertices with $2$ and $5$, and its two degree-$2$ vertices
with $3$ and $4$, with the vertex closer to $u$ receiving $3$ and the
vertex closer to $v$ receiving $4$. Thus consecutive diamonds are
joined by an edge whose endpoints have colors $4$ and $3$.

The three neighbors of $u$ receive colors $1,2,3$, and the three
neighbors of $v$ receive colors $0,2,4$. Every internal vertex of the
string also has three neighbors of pairwise distinct colors. All other neighborhoods are unchanged.
Hence $\phi$ extends to a $3$-dynamic coloring of $G$ with at most
$\max\{\chi_3^d(F),6\}$ colors.
\end{proof}

Oum characterized bridgeless claw-free cubic graphs in terms of a reduction to cubic multigraphs. To handle graphs with bridges, we extend Proposition~1 of Oum \cite{Oum2011} as follows.

\begin{proposition} \label{prop:oum-extended}
Let $G$ be a connected claw-free cubic graph. Then one of the
following holds:
\begin{enumerate}
    \item $G\cong K_4$;
    \item $G$ is a ring of diamonds;
    \item $G$ is obtained from a connected cubic multigraph $H$,
    possibly containing cutedges and loops, by replacing every
    vertex of $H$ with a triangle and every edge of $H$ with a
    possibly empty string of diamonds. Moreover, every loop of $H$
    is replaced with a nonempty string of diamonds.
\end{enumerate}
\end{proposition}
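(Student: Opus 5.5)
The plan is a direct structural analysis, mirroring Oum's argument. The basic fact we use is that in a claw-free cubic graph the neighborhood $N(v)$ of every vertex contains an edge, so every vertex lies in a triangle.

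\textbf{Step 1: local structure.} Consider two triangles sharing an edge $xy$, say $xyz$ and $xyz'$. Then $\{x,y,z,z'\}$ induces either $K_4$ or a diamond. If it is $K_4$, connectivity and cubicity force $G\cong K_4$, which is outcome 1. Otherwise it is a diamond with degree-2 vertices $z,z'$, and two sub-cases arise.
\begin{itemize}
  \item If $zz'\in E$, we again get $K_4$, which is impossible here.
  \item So $z$ and $z'$ each have exactly one further neighbor outside the diamond.
\end{itemize}
Next, suppose two diamonds meet. They cannot share a vertex by degree counting, so they are disjoint, and any edge between them joins their degree-2 vertices. Finally, every vertex lies either in a diamond or in exactly one triangle. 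Indeed, if $v$ lies in two triangles, either these share an edge (a diamond) or they share only $v$, which needs degree $\ge4$. Hence the triangles not contained in diamonds are pairwise vertex-disjoint.

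\textbf{Step 2: maximal chains of diamonds.} Consider a degree-2 vertex $z$ of a diamond and its outside neighbor $t$. Then $t$ lies either in a diamond, as a degree-2 vertex, or in a unique non-diamond triangle $T$. In the second case $t$ has two neighbors in $T$ plus $z$. We also check claw-freeness at $z$: the vertex $z$ has neighbors $x,y,t$ with $xy\in E$, so there is no problem there.

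Follow these outside edges from diamond to diamond. Each diamond has exactly two ``ends'', so the diamonds decompose into maximal paths and cycles in the auxiliary graph.
\begin{itemize}
  \item If some cycle exists, it is closed under neighbors, so by connectivity $G$ is a ring of diamonds, which is outcome 2.
  \item Otherwise every maximal string has two end vertices, each adjacent to a vertex in a non-diamond triangle.
\end{itemize}

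\textbf{Step 3: building $H$.} Let the vertices of $H$ be the non-diamond triangles. Each triangle vertex has exactly one neighbor outside its triangle, by Step 1. That outside edge leads either directly to another triangle vertex or into a maximal string of diamonds, whose other end reaches a triangle vertex. Each of these gives one edge of $H$.

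Each triangle thus contributes three half-edges, so $H$ is a cubic multigraph, and it is connected because $G$ is. A loop arises when both ends of a connection return to the same triangle. A direct edge between two vertices of the same triangle is impossible, since it would give a double edge in $G$. Hence a loop must go through a nonempty string of diamonds, which is exactly the condition on loops in outcome 3. Parallel edges and cutedges are allowed, and the latter are exactly why the statement extends Oum's bridgeless version.

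\textbf{Main obstacle.} The delicate step is Step 1. One must prove that non-diamond triangles are disjoint from diamonds and from each other, and that the auxiliary structure of adjacent diamonds has maximum degree two. It is this degree bound that makes each component of diamonds a path or a cycle. Both facts follow from degree counting, with care in the small cases where $K_4$ could appear.
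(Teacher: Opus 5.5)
Your proposal is correct and follows essentially the same route as the paper. You partition $V(G)$ into diamonds and non-diamond triangles, note that the diamonds form paths or a single cycle (the ring case), and collapse each maximal string into an edge of a cubic multigraph on the triangles. A loop must then contain a diamond, because no outside edge can join two vertices of the same triangle. The paper says the same thing by contracting the partition classes to a loopless multigraph $B$ and suppressing its degree-$2$ vertices, which is exactly your half-edge construction. Your sub-case $zz'\in E$ is vacuous, since the diamond is induced.
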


\begin{proof}
We use the elementary observations appearing in the proof of
Proposition~1 of Oum.
Every vertex of $G$ belongs to a triangle. Moreover, if a vertex
belongs to $3$ triangles, then 
$G\cong K_4$.
Thus, we may assume that every vertex belongs to at most $2$ triangles.
A vertex belonging to two triangles lies in an induced diamond, and
distinct diamonds are vertex-disjoint. Every vertex not contained
in a diamond consequently belongs to exactly one triangle. Hence,
the vertex set of $G$ has a partition $\mathcal{P}$ whose members
induce triangles or diamonds.

Let $B$ be the multigraph obtained by contracting every member of
$\mathcal{P}$ to a single vertex. The edges of $B$ correspond to
edges of $G$ joining distinct members of $\mathcal{P}$.
Note that $B$ is loopless but can have parallel edges.
If $P\in\mathcal{P}$ is a triangle, then the corresponding vertex
of $B$ has degree three. If $P$ is a diamond, then the corresponding
vertex of $B$ has degree two. Therefore, $d_B(v)\in\{2,3\}$
for every $v\in V(B)$.
If every vertex of $B$ has degree two, then $B$ is a cycle, since
$B$ is connected. In this case, $G$ is a ring of diamonds.
Otherwise, suppress all degree-two vertices of $B$. That is, every
maximal path whose internal vertices have degree two is replaced
with one edge. The two endvertices of such a path are allowed to
coincide, in which case the resulting edge is a loop.

Let $H$ be the resulting multigraph. Every vertex of $H$ has degree
three, and hence $H$ is a connected cubic multigraph. (It may contain
parallel edges, bridges and loops).
Every edge $e$ of $H$ corresponds to a maximal path of $B$ whose
internal vertices represent diamonds. If this path has $k$ internal
vertices, then $e$ corresponds to a string of $k$ diamonds.
Reversing the contractions shows that $G$ is obtained from $H$ by replacing its vertices with triangles and its edges with strings of diamonds.
Moreover, if $e$ is a loop of $H$, then its corresponding path in $B$ begins
and ends at the same triangle block. As $B$ is loopless, the path contains at least one diamond block, and the
loop is replaced with a nonempty string of diamonds.
\end{proof}

Finally, we need the following auxiliary result. An \emph{incidence} of a loopless multigraph $H$ is a pair
$(v,e)$, where $v\in V(H)$ and $e\in E(H)$ is incident with $v$.
Two incidences $(u,e)$ and $(v,f)$ are said to be \emph{adjacent} if at
least one of the following conditions holds: $u=v$, $e=f$, or one of $e$ and $f$ is an edge joining $u$ and $v$.
An \emph{incidence list assignment} of $H$ is a function $L$ that
assigns to every incidence $(v,e)$ a nonempty set $L(v,e)$ of
colors. An \emph{incidence $L$-coloring} is a function
$    \varphi$
such that
$
    \varphi(v,e)\in L(v,e)
$
for every incidence $(v,e)$, and adjacent incidences receive distinct
colors.

The multigraph $H$ is \emph{incidence $k$-choosable} if it admits an
incidence $L$-coloring for every incidence list assignment $L$
satisfying
$
    |L(v,e)|\geq k
$
for every incidence $(v,e)$. The \emph{incidence choice number} of $H$,
denoted by $\operatorname{ch}_{i}(H)$, is the least integer $k$ for
which $H$ is incidence $k$-choosable.

We shall use the following theorem of Kang and Park.

\begin{theorem}[Kang and Park~\cite{KangPark2019}]
\label{thm:kang-park}
Every loopless subcubic multigraph $H$ is incidence
$6$-choosable. Equivalently,
$    \operatorname{ch}_{i}(H)\leq 6$.
\end{theorem}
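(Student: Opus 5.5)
The plan is a minimal counterexample argument in the language of arcs, reducing to a small list-colouring problem around one deleted vertex. Replace each edge $e=uv$ of $H$ by the two opposite arcs $u\to v$, which stands for the incidence $(u,e)$, and $v\to u$, which stands for $(v,e)$. Two incidences are adjacent exactly when the corresponding arcs
(i) have the same tail,
(ii) are reverses of each other, or
(iii) the head of one is the tail of the other.

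So an arc $v\to u$ conflicts with at most $2$ arcs sharing its tail, its reverse, at most $2$ arcs leaving $u$, and at most $2$ arcs entering $v$. The conflict graph therefore has maximum degree $7$, and greedy colouring alone does not give $6$.

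Take a counterexample $H$ with $|V(H)|+|E(H)|$ minimal, together with a list assignment $L$ with $|L|\ge 6$ that admits no incidence $L$-colouring. We may assume $H$ is connected. Pick a vertex $v$ with incident edges $e_i=vu_i$ for $i\le 3$. Delete $v$; the result is still loopless and subcubic, so by minimality it has an incidence $L$-colouring. It remains to colour the in-arcs $u_i\to v$ and the out-arcs $v\to u_i$.

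\textbf{Counting residual lists.} An in-arc $u_i\to v$ has the same tail as the at most $2$ other arcs leaving $u_i$, and its tail $u_i$ is the head of the at most $2$ arcs entering $u_i$. These are already coloured, so at least $2$ colours remain for it. An out-arc $v\to u_i$ meets the coloured part only through the at most $2$ arcs leaving $u_i$, so at least $4$ colours remain for it.

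Among the six new arcs, the in-arcs are pairwise non-adjacent. The out-arcs form a triangle. Every out-arc is adjacent to every in-arc, since each in-arc ends at $v$, the tail of each out-arc. So the residual problem is list-colouring the join of $K_3$ (lists of size $\ge4$) with $\overline{K_3}$ (lists of size $\ge2$).

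\textbf{The key step.} We must colour the in-arcs using as few distinct colours as possible, leaving enough for a system of distinct representatives on the out-arcs.
\begin{itemize}
\item If two in-arc lists intersect, give those two arcs a common colour. At most $2$ colours are then used on in-arcs, so each out-list keeps at least $2$ colours. A triangle with lists of sizes $\ge2$ needs Hall's condition to be checked, so I would refine the choice: pick the in-colours to avoid a colour lying in all three out-lists whenever possible.
\item If the in-lists are pairwise disjoint, they contain at least $6$ colours. There is then freedom to choose the in-colours outside the out-lists, or to let each out-list lose at most one colour.
\end{itemize}
Where the counting is tight (all lists exactly of size $2$ and $4$ and aligned), I would exploit minimality further. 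Examples are choosing $v$ of degree $\le2$ if one exists, which gives more slack; handling parallel edges separately, since $u_i=u_j$ merges constraints but also removes arcs; and recolouring one arc leaving some $u_i$ when Hall's condition fails.

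I expect this final Hall/recolouring analysis on the $K_3\vee\overline{K_3}$ configuration to be the main obstacle.
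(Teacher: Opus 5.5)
There is a genuine gap. The paper gives no proof of this statement; it is quoted from Kang and Park. So your proposal has to stand on its own, and it does not. The step you defer is the whole difficulty, and the local extension problem you reduce to is false in general.

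Your setup is correct: the arc model, the residual lists $A_i$ of size $\ge 2$ on the in-arcs $u_i\to v$ and $B_i$ of size $\ge 4$ on the out-arcs $v\to u_i$, and the residual conflict graph $K_3\vee\overline{K_3}$. But take $A_1=\{1,2\}$, $A_2=\{3,4\}$, $A_3$ arbitrary, and $B_1=B_2=B_3=\{1,2,3,4\}$. Nothing in your argument excludes this, since $L$ is arbitrary and minimality gives you an arbitrary colouring of $H-v$. Every out-arc is adjacent to every in-arc. So any choice on $u_1\to v$ and $u_2\to v$ removes one colour of $\{1,2\}$ and one of $\{3,4\}$ from every out-list, leaving the same set of at most two colours for a triangle. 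This refutes your second bullet: the in-lists are pairwise disjoint, yet the in-colours cannot avoid the out-lists, and each out-list loses two colours. With $A_1=A_2=\{1,2\}$, $A_3=\{3,4\}$ the same failure occurs in your first bullet, where two in-lists intersect. Hence no rule for choosing the in-colours, Hall-guided or otherwise, completes the extension.

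Choosing $v$ cleverly does not avoid the obstruction. The reductions you list (a vertex of degree $\le 2$, parallel edges) only show that a minimal counterexample is cubic. In a simple cubic graph every vertex gives the same counts, so this configuration can occur wherever you delete.

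The proposed repair, recolouring one arc $u_i\to w$, is not guaranteed to be available either. In $H-v$ that arc already has five coloured neighbours, so with a list of size six its current colour may be its only option, and any change propagates further.

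What is missing is a genuinely non-local ingredient. One possibility is a larger reducible configuration whose deletion gives some nearby arcs enough slack to pre-restrict their lists before invoking minimality, together with a proved extension lemma for the resulting residual graph (via Hall's theorem or Alon--Tarsi). Another is a global structure of the cubic multigraph that dictates the colouring order. Supplying such an ingredient is the actual content of Kang and Park's theorem, so your proposal stops exactly where the real proof has to begin.
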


We are ready to show our first theorem.

\begin{theorem} \label{thm:clawfreeleq6}
Let $G$ be a claw-free cubic graph. Then $\chi_3^d(G)\leq 6$.
Moreover, the bound is tight.
\end{theorem}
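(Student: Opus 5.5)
We may assume $G$ is connected, since $\chi_3^d$ of a disjoint union is the maximum over its components. By Proposition~\ref{prop:oum-extended}, $G$ is $K_4$, a ring of diamonds, or is obtained from a connected cubic multigraph $H$ (possibly with loops and bridges) by replacing vertices with triangles and edges with strings of diamonds.

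\textbf{Easy cases.} If $G\cong K_4$, then $G^2=K_4$ and $\chi_3^d(G)=4$. If $G$ is a ring of diamonds, Observation~\ref{obs:ringX3leq5} gives $\chi_3^d(G)\le 5$.

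\textbf{Reduction to the core graph.} In the third case, strip the diamond strings away using Proposition~\ref{prop:chi3Gleqchi3F}.
\begin{itemize}
\item Non-loop edges of $H$: part~1 of Proposition~\ref{prop:chi3Gleqchi3F} lets us replace each nonempty string by a single edge without increasing $\chi_3^d$.
\item Loops of $H$ at a vertex $x$: the triangle $T_x$ has two vertices $u,v$ attached to the loop's string. Delete that string, so $u,v$ become adjacent degree-two vertices with common neighbor $w$ (the third vertex of $T_x$). Part~2 then bounds $\chi_3^d$ by $\max\{\chi_3^d(F),6\}$.
\end{itemize}
Let $F$ be the resulting graph. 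It is obtained from the loopless multigraph $H'$ (that is, $H$ with its loops removed) by replacing each vertex with a triangle and each edge with a single edge. Here $H'$ is subcubic, and a vertex that lost a loop has degree $1$ in $H'$. So it suffices to prove $\chi_3^d(F)=\chi(F^2)$-type bound: $F$ admits a $3$-dynamic coloring with at most $6$ colors. Note that $\Delta(F)=3$ here, as Proposition~\ref{prop:chi3Gleqchi3F} requires; applying part~1 first and part~2 afterwards is harmless because the parts touch disjoint edges.

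\textbf{Main step: incidence coloring.} Identify each vertex of $F$ with an incidence of $H'$: the triangle vertex of $T_x$ whose external edge is $e$ corresponds to $(x,e)$. The two triangle vertices left over from a deleted loop correspond to no incidence, so handle them separately. I would first do the loopless case and argue it gives a proper coloring of $F^2$. Two vertices of $F$ within distance $2$ are one of the following:
\begin{itemize}
\item in the same triangle: incidences with the same vertex, hence adjacent;
\item the two ends of an edge $e$ of $H'$: incidences with the same edge, hence adjacent;
\item a vertex $a\in T_x$ (incidence $(x,f)$) and the far end of an edge $e=xy$ leaving $T_x$ from another vertex (incidence $(y,e)$): $e$ joins $x$ and $y$, so these are adjacent incidences.
\end{itemize}
These exhaust the distance-$2$ pairs, so an incidence coloring of $H'$ is exactly a proper coloring of $F^2$. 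By Theorem~\ref{thm:kang-park}, $H'$ is incidence $6$-colorable, giving $\chi(F^2)\le 6$. Then $\chi_3^d(F)\le 6$ follows, since every vertex of degree $d$ must see $\min\{3,d\}$ distinct colors and any proper coloring of $F^2$ guarantees this.

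\textbf{Main obstacle: loop remnants.} The delicate part is the degree-two vertices $u,v$ left by deleted loops. I would add a pendant-like dummy: treat $u$ and $v$ as incidences $(x,e_u)$ and $(x,e_v)$ of two new pendant edges to new leaf vertices in an auxiliary loopless subcubic multigraph $H''$. The incidences at the leaves are then unneeded, and dropping constraints only helps. Applying Theorem~\ref{thm:kang-park} to $H''$ gives a $6$-coloring of $F^2$, and the same adjacency check as above goes through, so $\chi_3^d(G)\le 6$.

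\textbf{Tightness.} For $C_3\square K_2$, which has diameter $2$, $G^2=K_6$, so $\chi_3^d=6$.
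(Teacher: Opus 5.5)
Your proof is correct and follows the paper's argument. The paper likewise uses the decomposition of Proposition~\ref{prop:oum-extended}, reduces to the diamond-free graph via both parts of Proposition~\ref{prop:chi3Gleqchi3F}, takes an incidence $6$-coloring of the loop-deleted multigraph from Theorem~\ref{thm:kang-park}, and uses the prism for sharpness. One wording point: Proposition~\ref{prop:chi3Gleqchi3F} gives $\chi_3^d(G)\le\max\{\chi_3^d(F),6\}$, so ``replace a string by an edge without increasing $\chi_3^d$'' states the inequality backwards, although you then use it in the correct direction.

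The only real deviation is how you treat the two degree-two vertices of a triangle that lost a loop. You attach two pendant edges there so that Theorem~\ref{thm:kang-park} colors them directly. The paper instead colors them greedily afterwards, with two distinct colors avoiding those of $w$ and of its outside neighbor. Both work.
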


\begin{proof}
It is enough to consider $G$ connected. If $G\cong K_4$, the result is
immediate. If $G$ is a ring of diamonds, the result follows from
Observation~\ref{obs:ringX3leq5}. Hence, by
Proposition~\ref{prop:oum-extended}, we may assume that $G$ is obtained
from a connected cubic multigraph $H$ by replacing its vertices with
triangles and its edges with possibly empty strings of diamonds.

Remove all loops of $H$, and let $\widehat H$ be the resulting loopless
subcubic multigraph. By Theorem~\ref{thm:kang-park},
$\widehat H$ has an incidence coloring $\phi$ using at most six colors.
Let $\widehat G$ be obtained from $\widehat H$ by replacing every vertex
$x$ with a triangle $T_x$. If $u\in T_x$ corresponds to an edge
$e=xy\in E(\widehat H)$, set
\[
\widehat\phi(u)=\phi(x,e).
\]
If $d_{\widehat H}(x)=3$, then the three colors seen by any vertex of
$T_x$ correspond to three pairwise adjacent incidences of $\widehat H$,
and hence are distinct.
If $d_{\widehat H}(x)=1$, then $T_x$ contains two adjacent vertices of
degree two. Let $w$ be the third vertex of $T_x$ and let $z$ be its
neighbor outside $T_x$. Since $\widehat\phi(w)\neq\widehat\phi(z)$, we
can assign to the two degree-two vertices two distinct colors, both
different from $\widehat\phi(w)$ and $\widehat\phi(z)$. Thus
$\widehat\phi$ extends to a $3$-dynamic coloring of $\widehat G$, and
therefore
$
\chi_3^d(\widehat G)\leq 6.
$
Finally, $G$ is obtained from $\widehat G$ by repeatedly applying the
two operations of Proposition~\ref{prop:chi3Gleqchi3F}: non-loop edges
are replaced by strings of diamonds, whereas each deleted loop is
recovered by adding a string of diamonds between the two corresponding
degree-$2$ vertices of the same triangle. Hence
\[
\chi_3^d(G)\leq\chi_3^d(\widehat G)\leq6.
\]

The bound is tight. Indeed, the triangular prism $C_3\square K_2$ is
cubic and claw-free and has diameter two. In any $3$-dynamic coloring,
two adjacent vertices receive distinct colors, and two vertices with a
common neighbor also receive distinct colors. Hence all six vertices of
$C_3\square K_2$ must receive distinct colors, and therefore
$
\chi_3^d(C_3\square K_2)=6.
$
\end{proof}

We now proceed with the Hamiltonian case.
By Proposition~\ref{prop:oum-extended}, a loop or a
bridge of $H$ would give rise to a bridge in $G$.
Hence we have the next corollary.

\begin{corollary}\label{cor:oum-bridgeless}
Let $G$ be a connected bridgeless claw-free cubic graph which is neither
$K_4$ nor a ring of diamonds. Then $G$ is obtained from a loopless
bridgeless cubic multigraph $H$ by replacing every vertex of $H$ with
a triangle and every edge of $H$ with a possibly empty string of diamonds.
\end{corollary}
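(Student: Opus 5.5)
By Proposition~\ref{prop:oum-extended}, and since $G$ is neither $K_4$ nor a ring of diamonds, $G$ is obtained from a connected cubic multigraph $H$ by replacing every vertex of $H$ with a triangle and every edge of $H$ with a possibly empty string of diamonds. Every loop of $H$ is replaced by a nonempty string of diamonds. It therefore suffices to show that $H$ has no loops and no bridges.

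\textbf{Bridges.} Let $e=xy$ be a bridge of $H$ that is not a loop. Then $H-e$ is disconnected, with $x$ and $y$ in different components. Suppose $e$ is replaced by the empty string; then $e$ corresponds to a single edge of $G$ joining $T_x$ and $T_y$. Any path in $G$ between $T_x$ and $T_y$ avoiding this edge would project, after contracting triangles and diamond strings, to an $x$--$y$ walk in $H-e$, which is impossible. Hence this edge is a bridge of $G$. If instead $e$ is replaced by a nonempty string, the edge of $G$ joining $T_x$ to the first diamond of the string is a bridge, by the same projection argument.

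\textbf{Loops.} Let $e$ be a loop at $x$. Then $T_x$ has two vertices attached to the ends of the nonempty diamond string of $e$, and its third vertex $w$ has a unique neighbor $z$ outside $T_x$. The subgraph consisting of $T_x$ together with the diamond string is attached to the rest of $G$ only through the edge $wz$. Hence $wz$ is a bridge of $G$.

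The only point needing care is that $H$ has no vertex besides $x$ in the loop case. In that situation $H$ consists of a single vertex of degree $3$ carrying a loop (degree $2$), and its remaining half-edge must go somewhere. A cubic multigraph with one vertex cannot be cubic using only loops, since a loop contributes $2$ to the degree and $3$ is odd. So $x$ has a non-loop edge to another vertex $y\neq x$. That edge is itself a bridge of $H$, handled as above, and in any case $wz$ (or the first edge of the string) separates $G$.

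Both possibilities contradict the assumption that $G$ is bridgeless. Hence $H$ is loopless and bridgeless, which proves the corollary.
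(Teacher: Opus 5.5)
Your proposal is correct and follows the paper's argument. The paper only remarks that, by Proposition~\ref{prop:oum-extended}, a loop or a bridge of $H$ would give a bridge in $G$; you supply the details, namely path projection for bridges, the edge $wz$ cutting off $T_x$ together with its loop string, and the parity check that the third edge at $x$ goes to some $y\neq x$.
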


Let $X_{12}$ be the graph obtained from the cubic multigraph consisting
of a $4$-cycle with two opposite edges doubled by replacing each
vertex with a triangle. Let $X_{16}^1$ and $X_{16}^2$ be the graphs
obtained from $X_{12}$ by inserting one diamond into, respectively,
an edge belonging to a parallel pair and a simple edge of the
underlying cubic multigraph.
We need the next technical lemma, whose proof is left in the Appendix.

\begin{lemma}\label{lem:small-diamond-insertions}
If $G$ is obtained from $C_3\square K_2$ by inserting one diamond, or
from $X_{12}$ by inserting two diamonds into edges corresponding to the
underlying cubic multigraph, then
$
\chi_3^d(G)\leq5.
$
\end{lemma}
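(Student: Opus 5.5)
The plan is a finite case analysis up to symmetry, with an explicit $5$-coloring of the square in each case. For $C_3\square K_2$ there are only two edge types in the underlying multigraph, which is a theta graph with three parallel edges between two vertices. Every edge of it is a ``rung'' joining the two triangles, so by symmetry inserting one diamond gives a single graph $G_1$ on $10$ vertices.

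For $X_{12}$, the underlying multigraph $M$ is a $4$-cycle $x_1x_2x_3x_4$ in which $x_1x_2$ and $x_3x_4$ are doubled. Its edges are of two types: the four parallel edges, and the two simple edges $x_2x_3$ and $x_4x_1$. Inserting two diamonds (possibly both into the same edge, giving a string of two) yields, up to the automorphisms of $M$, the following cases:
\begin{enumerate}[(a)]
\item both diamonds in one parallel edge;
\item both in one simple edge;
\item the two parallel edges of the same pair;
\item parallel edges from different pairs;
\item the two simple edges;
\item one parallel edge and one simple edge.
\end{enumerate}
Case (d) may split further into two subcases, according to whether the chosen edges are "aligned" along the $4$-cycle. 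I would list these carefully, using the dihedral-type symmetry of $M$ that swaps the pairs and reverses the cycle.

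For each resulting graph, on $16$ or $20$ vertices, I will exhibit a coloring with colors $\{0,\dots,4\}$ and verify that it is proper in $G^2$. In a cubic graph this is exactly the condition that every closed neighborhood $N[v]$ receives four distinct colors, i.e.\ that $\chi_3^d(G)=\chi(G^2)$ via \eqref{eq:regular-square}. To make the constructions systematic, I would reuse the local gadget from the proof of Proposition~\ref{prop:chi3Gleqchi3F}. A diamond inserted into an edge $uv$ can be colored with the two degree-$3$ vertices in $\{1,2\}$ and the tips $3,0$, provided the endpoints have suitable colors. This reduces each case to coloring the triangles, with the constraint along each modified edge that the tip colors replace the former neighbor's color. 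In practice the triangles are nearly determined once one triangle is fixed, because $N[v]$ constraints force colors around each triangle. So I would propagate colors greedily, backtracking over few choices.

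The main obstacle is that the base graphs themselves are not $5$-colorable in the square. Indeed, $C_3\square K_2$ needs $6$ colors, and $X_{12}$ (and presumably $X^1_{16},X^2_{16}$) are exceptions. Hence the diamonds must be used to break the obstruction rather than just preserved as in Proposition~\ref{prop:chi3Gleqchi3F}. Concretely, the color $u$ "sees through" the edge $uv$ must change: the tip adjacent to $u$ will get a color different from $\phi(v)$. So the diamond coloring must be chosen non-uniformly, e.g.\ with degree-$3$ vertices colored from a pair other than $\{1,2\}$, or with both tips equal. For two diamonds in a string, note that the tips of one diamond may share a color, since the tips are at distance $2$ via the degree-$3$ vertices. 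Tips of consecutive diamonds must differ, and a consistent pattern is needed. I expect the hardest cases to be those where the diamonds are in the same parallel pair or far apart, such as cases (c) and (d). There the remaining unmodified part of $X_{12}$ still contains dense $K_6$-like local structure. If hand search stalls, I would fall back on a short computer check, as the graphs are tiny, and record the colorings in figures.
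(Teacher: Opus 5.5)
Your case split is correct and matches the paper's: its six configurations $(e_0,e_0)$, $(e_1,e_1)$, $(e_0,e_4)$, $(e_0,e_2)$, $(e_1,e_3)$, $(e_0,e_1)$ are exactly your (a)--(f). The paper's proof consists of nothing more than one explicit $5$-coloring for each of these, plus one for the prism. Case (d) does not split. If $u_1v_1,u_2v_2$ are the two edges of a parallel pair, with $u_1,u_2$ in one triangle and $v_1,v_2$ in the other, then swapping $u_1\leftrightarrow u_2$ and $v_1\leftrightarrow v_2$ is an automorphism of $X_{12}$.

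\textbf{Missing certificates.} You never produce these colorings, and for a finite-check lemma like this one they \emph{are} the proof. Their existence is not automatic: the bound $\chi_3^d\le5$ is false for $C_3\square K_2$, for $X_{12}$, and for the one-diamond graphs $X^1_{16},X^2_{16}$. So ``propagate greedily, or run a computer search'' establishes nothing until the output is written down and checked. For the prism, take triangles $a_0b_0c_0$, $a_1b_1c_1$, and replace the rung $c_0c_1$ by a diamond with tips $p\sim c_0$, $q\sim c_1$ and middle vertices $x,y$. A valid certificate is $c_0,c_1\mapsto 0$, $a_0,q\mapsto 1$, $b_0,x\mapsto 2$, $a_1,p\mapsto 3$, $b_1,y\mapsto 4$. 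You need a table of this kind for each of (a)--(f).

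\textbf{A heuristic that cannot work.} The two tips $p,q$ of a diamond both lie in $N[x]$ for each middle vertex $x$. By your own criterion that every closed neighborhood is rainbow, they must receive different colors. Being at distance $2$ is exactly what \emph{forbids} sharing a color, so ``both tips equal'' never occurs. What a diamond inserted into $uv$ actually relaxes are constraints that passed through the deleted edge. In particular, $u$ and $v$ lie in different triangles and have no common neighbor, so afterwards they are at distance at least $3$ and may share a color. This is precisely what $c_0$ and $c_1$ do in the coloring above.
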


We now proceed to prove the Hamiltonian setting.

\begin{theorem}\label{thm:ham-clawfree}
Let $G$ be a Hamiltonian claw-free cubic graph. 
Then
$
\chi_3^d(G)\leq5.
$
if and only if
$
G\notin\{C_3\square K_2,X_{12},X_{16}^{1},X_{16}^{2}\}.
$
\end{theorem}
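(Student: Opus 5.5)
The proof splits into two directions. For necessity, I will show that each of the four exceptional graphs has $\chi(G^2)\ge 6$.

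\textbf{The four exceptions.} For $C_3\square K_2$ this is the tightness part of Theorem~\ref{thm:clawfreeleq6}. For $X_{12}$, $X_{16}^1$ and $X_{16}^2$ I will use a counting argument on $G^2$. A $5$-colouring of $G^2$ has colour classes that are $2$-packings, i.e.\ sets of vertices pairwise at distance at least $3$. In $X_{12}$, each triangle together with its neighbourhood covers $6$ of the $12$ vertices. I will check by hand that $\alpha(X_{12}^2)\le 2$, which gives $\chi(X_{12}^2)\ge \lceil 12/2\rceil=6$. For $X_{16}^1$ and $X_{16}^2$ the same bound gives only $\lceil 16/\alpha\rceil$, which may not suffice. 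There I will argue locally: the inserted diamond forces specific colour patterns on the triangles at its ends, and together with the parallel-pair structure these patterns lead to a contradiction. This is a finite case check, and I may defer it to the Appendix alongside Lemma~\ref{lem:small-diamond-insertions}.

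\textbf{Sufficiency: setup.} Let $G$ be Hamiltonian and not one of the four exceptions. A Hamiltonian graph is bridgeless, so by Corollary~\ref{cor:oum-bridgeless}, $G$ is $K_4$ (where $\chi=4$), or a ring of diamonds (handled by Observation~\ref{obs:ringX3leq5}), or it arises from a loopless bridgeless cubic multigraph $H$ by triangle and diamond replacement. A Hamilton cycle $C$ of $G$ passes through each triangle $T_x$ entering and leaving along two of its three external edges. Hence $C$ induces a Hamilton cycle $C_H$ of $H$, and every edge of $H$ carrying a nonempty diamond string lies on $C_H$. Thus $H$ is a Hamiltonian cubic multigraph: a cycle $x_0x_1\dots x_{n-1}$ together with a perfect matching $M$ of chords, some of which may be parallel to cycle edges.

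By Proposition~\ref{prop:chi3Gleqchi3F}(1), inserting diamond strings never increases $\chi_3^d$ beyond that of the graph with shorter strings. So it suffices to $5$-colour $\widehat G$, the triangle replacement of $H$ with no diamonds, whenever $\widehat G\notin\{C_3\square K_2, X_{12}\}$. When $\widehat G$ is one of these two graphs, Lemma~\ref{lem:small-diamond-insertions} together with Proposition~\ref{prop:chi3Gleqchi3F}(1) handles every insertion pattern except the excluded ones: the prism with zero diamonds, and $X_{12}$ with zero diamonds or with one diamond, which gives $X_{16}^1$ or $X_{16}^2$. Strings of length at least two reduce to a single diamond, or to two diamonds, by the same monotonicity applied to the diamonds beyond the first.

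\textbf{Sufficiency: colouring $\widehat G$.} By the same argument as in the proof of Theorem~\ref{thm:clawfreeleq6}, a $5$-colouring of $\widehat G^2$ is equivalent to an incidence colouring of $H$ with $5$ colours. So the task is to find a $5$-incidence colouring of a Hamiltonian cubic multigraph. I will colour the incidences on the cycle $C_H$ using a periodic pattern of period about $5$. Each vertex $x_i$ has incidences $(x_i,x_ix_{i-1})$ and $(x_i,x_ix_{i+1})$ on the cycle, and I will choose the pattern so that at every vertex some colour remains free for the chord incidence. Then I will colour the chords, recolouring along the cycle when conflicts arise, with a separate adjustment for the residue of $n$ modulo the period.

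The main obstacle is this chord step. The two ends of a chord impose adjacency constraints on each other, and parallel chords, i.e.\ digons of $H$, restrict the available colours. My first approach is an inductive reduction. Contract a digon, or delete a chord and suppress the two resulting degree-$2$ vertices, to obtain a smaller Hamiltonian cubic multigraph, colour that by induction, and extend the colouring using list arguments with $5$ colours. The small base cases, namely the $3$-vertex-pair prism and the $4$-cycle with two doubled edges, are exactly where the exceptions arise. I will check them directly.
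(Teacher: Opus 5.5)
\textbf{Verdict: genuine gap.} Your necessity argument and your reductions are sound, but the sufficiency direction, which is the actual content of the theorem, is never carried out.

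What is correct:
\begin{itemize}
\item The prism is the tightness example of Theorem~\ref{thm:clawfreeleq6}.
\item $\alpha(X_{12}^2)\le 2$ holds because $X_{12}$ has no perfect code: for every triangle, two of its three external neighbours lie in a common triangle.
\item Your counting even settles $X_{16}^1$ and $X_{16}^2$, which the paper only asserts. A $5$-colouring of the square of a $16$-vertex cubic graph needs a colour class of size $4$, i.e.\ a perfect code. A short case check on which of the four diamond vertices lies in the code shows that neither graph has one.
\item A Hamilton cycle of $G$ must traverse every diamond.
\item Your bookkeeping with Proposition~\ref{prop:chi3Gleqchi3F} and Lemma~\ref{lem:small-diamond-insertions} is correct.
\item $3$-dynamic $5$-colourings of the triangle replacement are exactly incidence $5$-colourings of $H$.
\end{itemize}

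The gap: after that reduction you give no colouring, only intentions, and neither proposed route works as stated.
\begin{itemize}
\item Your induction cannot bottom out at the exceptions, since they are not $5$-colourable and some graphs reduce only onto them. $H=K_4$ has no digon, and deleting either chord and suppressing yields the theta graph, whose triangle replacement is the prism. The $6$-cycle with alternate edges doubled reduces, by either of your operations, to the doubled $4$-cycle, i.e.\ $X_{12}$. You would have to identify and colour directly every such graph, and your plan does not say which they are.
\item The extension step is not a routine list argument. Every incidence of a cubic graph is adjacent to $7$ others, and only $5$ colours are available.
\item A period-$5$ pattern on the cycle also fails. The four cycle incidences adjacent to a chord incidence (the two at its vertex and the two at its cycle neighbours) are consecutive, so they already get four distinct colours. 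At most one colour is left, and it must still avoid the partner's incidences.
\end{itemize}

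The missing idea is the paper's decomposition of $F$, the triangle replacement of $H$. Label each triangle $a_i,b_i,c_i$ so that $D=a_0b_0a_1b_1\ldots a_{r-1}b_{r-1}a_0$ is a $2r$-cycle and $\{c_0,\ldots,c_{r-1}\}$ induces the perfect matching $M$. In $F^2$, two vertices of $D$ conflict only if they are within distance $2$ along $D$, and two vertices $c_i,c_k$ conflict only if they are matched. So colour $D$ periodically with $0,1,2$ (period $3$, not $5$) and colour $M$ with $\{3,4\}$; this works outright when $3\mid 2r$. When $3\nmid 2r$, the defect is repaired in a bounded window, in one of two ways:
\begin{itemize}
\item If some chord joins nonconsecutive vertices, take such a chord $c_0c_j$ of maximum cyclic length. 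Colour $P$ and $\overline{Q}$ in the standard way, put colours $3,4$ on $a_0,b_0,a_j,b_j$, and choose $\{\phi(c_0),\phi(c_j)\}=\{1,2\}$ according to $|V(P)|$ and $|V(Q)|$ modulo $3$.
\item If every chord is parallel to a cycle edge, use the explicit colourings for $r\ge 8$.
\end{itemize}
This leaves exactly $r\in\{2,4\}$, i.e.\ the prism and $X_{12}$, where Lemma~\ref{lem:small-diamond-insertions} takes over.
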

\begin{proof}
If $G\cong K_4$, then the result is immediate. If $G$ is a ring of
diamonds, then the result follows from
Observation~\ref{obs:ringX3leq5}. Since $G$ is Hamiltonian, it is
bridgeless. Hence, by
Corollary~\ref{cor:oum-bridgeless}, $G$ is obtained from a loopless
bridgeless cubic multigraph $H$ by replacing every vertex of $H$ with
a triangle and every edge of $H$ with a possibly empty string of diamonds.
Let $t_0,\ldots,t_{r-1}$ be the vertices of $H$, and let $T_i$ be the
triangle replacing $t_i$.

Let $R$ be a Hamiltonian cycle of $G$. Each triangle block has exactly
two edges of $R$ leaving it, and every nonempty string of diamonds is
traversed by $R$ from one end to the other. Hence, after contracting
each triangle block to a vertex and each string of diamonds to an edge,
$R$ yields a Hamiltonian cycle
\[
t_0t_1\ldots t_{r-1}t_0
\]
of $H$.
Let $F$ be the graph obtained from $H$ by replacing each vertex with a
triangle, or equivalently, by replacing each string of diamonds in $G$
with a single edge. We label the vertices of $T_i$ as
$a_i,b_i,c_i$ so that
\[
D=a_0b_0a_1b_1\ldots a_{r-1}b_{r-1}a_0
\]
is a cycle in $F$.

Since $H$ is cubic, the edges of $H$ not belonging to the Hamiltonian
cycle form a perfect matching. Consequently,
$C=\{c_0,\ldots,c_{r-1}\}$ induces a perfect matching $M$ in $F$.
Given a path or cycle $P=u_0u_1\ldots u_k$, we call a coloring $\phi$
of $P$ standard if
$
\phi(u_i)\equiv i \pmod 3.
$

\begin{figure}[H]
    \centering

    % \tikzexternaldisable

\tikzsetnextfilename{fig-cubic-standard}
\begin{tikzpicture}[scale=1, transform shape,
    vertex/.style={circle, fill=black, inner sep=2pt}
]

% Círculo exterior
\draw (0,0) circle (3cm);

% Vértices a_i, b_i
\foreach \i/\name/\col in {
    0/$a_0$/0, 1/$b_0$/1, 2/$a_1$/2, 3/$b_1$/0,
    4/$a_2$/1, 5/$b_2$/2, 6/$a_3$/0, 7/$b_3$/1,
    8/$a_4$/2, 9/$b_4$/0, 10/$a_5$/1, 11/$b_5$/2}
{
    \node[vertex] (v\i) at ({90-30*\i}:3) {};
    \node [shift=({90-30*\i}:0.6)] at (v\i) {\name\;(\col)};
}

% Vértices c_i
\foreach \i/\name/\col in {
    0/$c_0$/3, 1/$c_1$/3, 2/$c_2$/4,
    3/$c_3$/4, 4/$c_4$/3, 5/$c_5$/4}
{
    \node[vertex] (c\i) at ({75-60*\i}:2) {};
    \node [shift=({75-60*\i}:-0.58)] at (c\i) {\name\;(\col)};
}

% Aristas a_i-c_i y b_i-c_i
\foreach \i in {0,...,5}
{
    \pgfmathtruncatemacro{\a}{2*\i}
    \pgfmathtruncatemacro{\b}{2*\i+1}
    \draw (v\a)--(c\i)--(v\b);
}

% Aristas del matching M
\draw (c0)--(c2);
\draw (c1)--(c3);
\draw (c4)--(c5);

\end{tikzpicture}

    \caption{The case $2r\equiv0\pmod3$ in the proof of
    Theorem \ref{thm:ham-clawfree}. In this case $2r=12$.
    In parentheses after each vertex label is shown the color
    assigned to that vertex.}
    \label{fig:2requiv0mod3}
\end{figure}

Whenever $\chi_3^d(F)\leq5$, repeated applications of
Proposition~\ref{prop:chi3Gleqchi3F} yield $\chi_3^d(G)\leq5$.
We now construct a $3$-dynamic $5$-coloring of $F$, except for the
small configurations that will be handled separately using
Lemma~\ref{lem:small-diamond-insertions}.
Suppose first that $2r\equiv0\pmod3$. Precolor $D$ with a standard
coloring $\phi$. Since $2r$ is a multiple of $3$, this coloring is
well defined and proper on $D$. For every $i$ (indices taken modulo
$r$), the vertices $b_{i-1}$ and $b_i$ receive distinct colors, and
so do $a_i$ and $a_{i+1}$.
Since $C=\{c_0,\ldots,c_{r-1}\}$ induces the perfect matching $M$,
extend $\phi$ by assigning colors $3$ and $4$ to the two endpoints of
each edge of $M$. Then $a_i$ sees distinct colors on $b_{i-1}$ and
$b_i$, together with the color of $c_i$, and $b_i$ sees distinct
colors on $a_i$ and $a_{i+1}$, together with the color of $c_i$.
Moreover, $c_i$ sees the two distinct colors of $a_i$ and $b_i$ and
the color of its neighbor in $M$. Hence every vertex has three
neighbors of pairwise distinct colors, and $\phi$ is a $3$-dynamic
$5$-coloring of $F$ (Figure~\ref{fig:2requiv0mod3}).
We may therefore assume from now on that
$2r\not\equiv0\pmod3$.

Suppose that every edge of $M$ joins two consecutive vertices of $C$.
After relabeling if necessary, we may assume that
\[
M=\{c_{2i}c_{2i+1}:0\leq i<r/2\}.
\]
We first consider the case $2r\equiv1\pmod3$. Since $r$ is even, either
$r=2$ or $r\geq8$.
If $r=2$, then $F\cong C_3\square K_2$. Since
$\chi_3^d(C_3\square K_2)=6$, we may assume that at least one diamond occurs in $G$.
Let $F'$ be obtained from $F$ by retaining exactly one such diamond.
By Lemma~\ref{lem:small-diamond-insertions},
$\chi_3^d(F')\leq5$. Since $G$ is obtained from $F'$ by inserting the
remaining diamonds, repeated applications of
Proposition~\ref{prop:chi3Gleqchi3F} yield $\chi_3^d(G)\leq5$.

Hence, we may assume that $r\geq8$. We precolor
$
a_0b_0a_1b_1\ldots a_{r-3}b_{r-3}
$
with a standard coloring $\phi$. We then set
$
\phi(c_0)=\phi(c_{r-3})=\phi(c_{r-1})=3,
\qquad
\phi(c_{r-2})=0,
$
and
$
\phi(a_{r-2})=\phi(b_{r-1})=4,\qquad
\phi(b_{r-2})=1,\qquad
\phi(a_{r-1})=2.
$
We color the remaining vertices of $C$ with colors $3$ and $4$ so that
the endpoints of every edge of $M$ receive distinct colors.
A direct check shows that the resulting coloring is proper and that the
three neighbors of every vertex receive pairwise distinct colors.
Hence $\phi$ is a $3$-dynamic $5$-coloring of $F$
(Figure~\ref{fig:cubic-consecutive}a).

We now consider the case $2r\equiv2\pmod3$. Since $r$ is even,
$r\equiv4\pmod6$. We first assume that $r\geq10$.
We precolor
\[
a_0b_0a_1b_1\ldots a_{r-5}b_{r-5}
\]
with a standard coloring $\phi$. We then set
$
\phi(c_0)=\phi(c_{r-1})=\phi(b_{r-3})
=\phi(a_{r-4})=3,
$
$
\phi(a_{r-2})=\phi(b_{r-1})
=\phi(c_{r-4})=\phi(c_{r-5})=4,
$
$
\phi(c_{r-2})=\phi(c_{r-3})=0,
$ and
$
\phi(a_{r-1})=\phi(b_{r-4})=1,
\qquad
\phi(b_{r-2})=\phi(a_{r-3})=2.
$
We color the remaining vertices of $C$ with colors $3$ and $4$ so that
the endpoints of every edge of $M$ receive distinct colors.
A direct check shows that the resulting coloring is proper and that the
three neighbors of every vertex receive pairwise distinct colors.
Hence $\phi$ is a $3$-dynamic $5$-coloring of $F$
(Figure~\ref{fig:cubic-consecutive}b).
Finally, If $r=4$, then
$
M=\{c_0c_1,c_2c_3\},
$
and hence $F\cong X_{12}$. If no diamond occurs, then
$G\cong X_{12}$; if exactly one diamond occurs, then
$G\cong X_{16}^1$ or $G\cong X_{16}^2$. It can be verified that $\chi_3^d(G)=6$ for any such $G$. Hence, assume that at least two diamonds occur in $G$.
Let $F'$ be obtained from $X_{12}$ by retaining exactly two of these
diamonds. By Lemma~\ref{lem:small-diamond-insertions},
$\chi_3^d(F')\leq5$. Since $G$ is obtained from $F'$ by inserting the
remaining diamonds, repeated applications of
Proposition~\ref{prop:chi3Gleqchi3F} yield $\chi_3^d(G)\leq5$.

\begin{figure}[H]
    \centering

    \begin{subfigure}{0.48\textwidth}
        \centering
        % \tikzexternaldisable

\tikzsetnextfilename{fig-cubic-consecutive-1}
\begin{tikzpicture}[scale=1,
    vertex/.style={circle, fill=black, inner sep=2pt}
]

% Círculo exterior
\draw (0,0) circle (3cm);

% Vértices a_i, b_i
\foreach \i/\name/\col in {
    0/$a_0$/0, 1/$b_0$/1, 2/$a_1$/2, 3/$b_1$/0,
    4/$a_2$/1, 5/$b_2$/2, 6/$a_3$/0, 7/$b_3$/1,
    8/$a_4$/2, 9/$b_4$/0, 10/$a_5$/1, 11/$b_5$/2,
    12/$a_6$/4, 13/$b_6$/1, 14/$a_7$/2, 15/$b_7$/4}
{
    \node[vertex] (v\i) at ({90-22.5*\i}:3) {};
    \node [shift=({90-22.5*\i}:0.65)] at (v\i) {\name\;(\col)};
}

% Vértices c_i, centrados entre a_i y b_i
\foreach \i/\name/\col in {
    0/$c_0$/3, 1/$c_1$/4, 2/$c_2$/3,
    3/$c_3$/4, 4/$c_4$/4, 5/$c_5$/3,
    6/$c_6$/0, 7/$c_7$/3}
{
    \node[vertex] (c\i) at ({78.75-45*\i}:2.4) {};
    \node [shift=({78.75-45*\i}:-0.58)] at (c\i) {\name\;(\col)};
}

% Aristas a_i-c_i y b_i-c_i
\foreach \i in {0,...,7}
{
    \pgfmathtruncatemacro{\a}{2*\i}
    \pgfmathtruncatemacro{\b}{2*\i+1}
    \draw (v\a)--(c\i)--(v\b);
}

% Aristas del matching M
\draw (c0)--(c1);
\draw (c2)--(c3);
\draw (c4)--(c5);
\draw (c6)--(c7);

\end{tikzpicture}
        \caption{$ $}
    \end{subfigure}
    \hfill
    \begin{subfigure}{0.48\textwidth}
        \centering
        \tikzsetnextfilename{fig-cubic-consecutive-2}
\begin{tikzpicture}[scale=1,
    vertex/.style={circle, fill=black, inner sep=2pt}
]

% Círculo exterior
\draw (0,0) circle (3cm);

% Vértices a_i, b_i
\foreach \i/\name/\col in {
    0/$a_0$/0,  1/$b_0$/1,
    2/$a_1$/2,  3/$b_1$/0,
    4/$a_2$/1,  5/$b_2$/2,
    6/$a_3$/0,  7/$b_3$/1,
    8/$a_4$/2,  9/$b_4$/0,
    10/$a_5$/1, 11/$b_5$/2,
    12/$a_6$/3, 13/$b_6$/1,
    14/$a_7$/2, 15/$b_7$/3,
    16/$a_8$/4, 17/$b_8$/2,
    18/$a_9$/1, 19/$b_9$/4}
{
    \node[vertex] (v\i) at ({90-18*\i}:3) {};
    \node [shift=({90-18*\i}:0.65)] at (v\i) {\name\;(\col)};
}

% Vértices c_i, centrados entre a_i y b_i
\foreach \i/\name/\col in {
    0/$c_0$/3, 1/$c_1$/4,
    2/$c_2$/3, 3/$c_3$/4,
    4/$c_4$/3, 5/$c_5$/4,
    6/$c_6$/4, 7/$c_7$/0,
    8/$c_8$/0, 9/$c_9$/3}
{
    \node[vertex] (c\i) at ({81-36*\i}:2.4) {};
    \node [shift=({81-36*\i}:-0.58)] at (c\i) {\name\;(\col)};
}

% Aristas a_i-c_i y b_i-c_i
\foreach \i in {0,...,9}
{
    \pgfmathtruncatemacro{\a}{2*\i}
    \pgfmathtruncatemacro{\b}{2*\i+1}
    \draw (v\a)--(c\i)--(v\b);
}

% Aristas del matching M
\draw (c0)--(c1);
\draw (c2)--(c3);
\draw (c4)--(c5);
\draw (c6)--(c7);
\draw (c8)--(c9);

\end{tikzpicture}
        %\caption{$2r\equiv2\pmod3$, $2r=20$.}
        \caption{$ $}
    \end{subfigure}

  \caption{The case in which every edge of $M$ joins two consecutive
vertices of $C$ in the proof of Theorem~\ref{thm:ham-clawfree}.
In parentheses after each vertex label is the color assigned to that
vertex. In (a), $2r\equiv1\pmod3$ and $2r=16$; in (b),
$2r\equiv2\pmod3$ and $2r=20$.}
    \label{fig:cubic-consecutive}
\end{figure}

We may therefore assume that $M$ contains an edge joining two
nonconsecutive vertices of $C$.
Among the edges $c_ic_k\in M$ joining nonconsecutive vertices of $C$,
choose one for which
$
\min\{|i-k|,\,r-|i-k|\}
$
is maximum. After relabeling the indices and, if necessary, reversing
their cyclic order, we may assume that this edge is $c_0c_j$, where
$
2\leq j\leq r/2.
$
Let
$
P=a_1b_1a_2b_2\ldots a_{j-1}b_{j-1}
$
and
$
Q=a_{j+1}b_{j+1}a_{j+2}b_{j+2}\ldots
a_{r-1}b_{r-1}.
$
Since
$
|V(P)|+|V(Q)|=2r-4
$
and $2r\not\equiv0\pmod3$, we have
$
|V(P)|+|V(Q)|\not\equiv2\pmod3.
$
Moreover, since $2\leq j\leq r-2$, both $P$ and $Q$ are nonempty.

Let $\overline{Q}$ denote the path $Q$ taken in the reverse order.
We precolor $P$ and $\overline{Q}$ with standard colorings $\phi$.
Thus, $\phi(a_1)=\phi(b_{r-1})=0$. We also set
$\phi(a_0)=\phi(c_1)=3$ and
$\phi(b_0)=\phi(c_{r-1})=4$.
We claim that $c_{j-1}$ and $c_{j+1}$ can be assigned distinct colors
from $\{3,4\}$ while preserving properness on the already colored edges
of $M$. If $j\geq3$, this follows directly from the fact that $M$ is a
matching. Suppose then that $j=2$. Since $c_1$ already has color $3$, we only
need to show that $c_3$ can receive color $4$. The only possible
obstruction is $c_3c_{r-1}\in M$. If $r\geq8$, this contradicts the
maximal choice of $c_0c_2$, since $c_3c_{r-1}$ has larger cyclic
distance. If $r=4$, then $c_3=c_{r-1}$ already has color $4$.

Thus, $c_{j-1}$ and $c_{j+1}$ can be assigned distinct colors from
$\{3,4\}$. By symmetry, we may assume that
$\phi(c_{j-1})=3$ and $\phi(c_{j+1})=4$. We then set
$\phi(a_j)=4$ and $\phi(b_j)=3$.
We color every remaining vertex of
$C\setminus\{c_0,c_j\}$ with colors $3$ and $4$ so that the endpoints
of every edge of $M$ other than $c_0c_j$ receive distinct colors.
At this point, every vertex outside $T_0\cup T_j$ satisfies the
$3$-dynamic condition. It remains to color $c_0$ and $c_j$.

First suppose that one of $|V(P)|$ and $|V(Q)|$ is congruent to $0$
modulo $3$. By symmetry, assume that $|V(P)|\equiv0\pmod3$.
Since $b_{j-1}$ is the last vertex of $P$, we have
$\phi(b_{j-1})=2$. Moreover, since
$|V(P)|+|V(Q)|\not\equiv2\pmod3$, we have
$|V(Q)|\not\equiv2\pmod3$. As $\overline{Q}$ is standard-colored,
it follows that $\phi(a_{j+1})\in\{0,2\}$.
We set $\phi(c_0)=2$ and $\phi(c_j)=1$. Since
$\phi(a_0)=3$, $\phi(b_0)=4$, and
$\phi(a_1)=\phi(b_{r-1})=0$, the coloring is proper and the
$3$-dynamic condition holds at the vertices of $T_0$.
Furthermore,
\[
\phi(b_{j-1})=2,\qquad
\phi(a_j)=4,\qquad
\phi(b_j)=3,\qquad
\phi(a_{j+1})\in\{0,2\}.
\]
Hence the coloring is proper and the $3$-dynamic condition also holds
at the vertices of $T_j$. Therefore, $\phi$ is a $3$-dynamic
$5$-coloring of $F$ (Figure~\ref{fig:cubic-notconsecutive}a).

\begin{figure}[H]
    \centering

    \begin{subfigure}{0.48\textwidth}
        \centering
        \tikzsetnextfilename{fig-cubic-P0mod3}
\begin{tikzpicture}[scale=1,
    vertex/.style={circle, fill=black, inner sep=2pt}
]

% Círculo exterior
\draw (0,0) circle (3cm);

% Vértices a_i, b_i
\foreach \i/\name/\col in {
    0/$a_0$/3,  1/$b_0$/4,
    2/$a_1$/0,  3/$b_1$/1,
    %4/$a_2$/1,  5/$b_2$/2,
    %6/$a_3$/0,  7/$b_3$/1,
    8/$a_{j-1}$/1,  9/$b_{j-1}$/2,
    10/$a_j$/4, 11/$b_j$/3,
    12/$a_{j+1}$/{0,2}, 13/$b_{j+1}$/{2,1},
    %14/$a_7$/2, 15/$b_7$/3,
    %16/$a_8$/4, 17/$b_8$/2,
    18/$a_{r-1}$/1, 19/$b_{r-1}$/0}
{
    \node[vertex] (v\i) at ({90-18*\i}:3) {};
    \node [shift=({90-18*\i}:0.7)] at (v\i) {\name\;(\col)};
}

% Vértices c_i, centrados entre a_i y b_i
\foreach \i/\name/\col in {
    0/$c_0$/2, 1/$c_1$/3,
    %2/$c_2$/3, 3/$c_3$/4,
    4/$c_{j-1}$/3, 5/$c_j$/1,
    6/$c_{j+1}$/4,
    %7/$c_7$/0, 8/$c_8$/0,
    9/$c_{r-1}$/4}
{
    \node[vertex] (c\i) at ({81-36*\i}:2.5) {};
    \node [shift=({81-36*\i}:-0.48)] at (c\i) {\name\;(\col)};
}

% Aristas a_i-c_i y b_i-c_i
% \foreach \i in {0,1,2,6,7,8}
\foreach \i in {0,1,4,5,6,9}
{
    \pgfmathtruncatemacro{\a}{2*\i}
    \pgfmathtruncatemacro{\b}{2*\i+1}
    \draw (v\a)--(c\i)--(v\b);
}

% Aristas del matching M
\draw (c0)--(c5);
%\draw (c2)--(c3);
%\draw (c4)--(c5);
%\draw (c6)--(c7);
%\draw (c8)--(c9);

\end{tikzpicture}
        \caption{$ $}
        \label{fig:2requiv1mod3}
    \end{subfigure}
    \hfill
    \begin{subfigure}{0.48\textwidth}
        \centering
        \tikzsetnextfilename{fig-cubic-P2mod3}
\begin{tikzpicture}[scale=1, 
    vertex/.style={circle, fill=black, inner sep=2pt}
]

% Círculo exterior
\draw (0,0) circle (3cm);

% Vértices a_i, b_i
\foreach \i/\name/\col in {
    0/$a_0$/3,  1/$b_0$/4,
    2/$a_1$/0,  3/$b_1$/1,
    %4/$a_2$/1,  5/$b_2$/2,
    %6/$a_3$/0,  7/$b_3$/1,
    8/$a_{j-1}$/0,  9/$b_{j-1}$/1,
    10/$a_j$/4, 11/$b_j$/3,
    12/$a_{j+1}$/{1,0}, 13/$b_{j+1}$/{0,2},
    %14/$a_7$/2, 15/$b_7$/3,
    %16/$a_8$/4, 17/$b_8$/2,
    18/$a_{r-1}$/1, 19/$b_{r-1}$/0}
{
    \node[vertex] (v\i) at ({90-18*\i}:3) {};
    \node [shift=({90-18*\i}:0.7)] at (v\i) {\name\;(\col)};
}

% Vértices c_i, centrados entre a_i y b_i
\foreach \i/\name/\col in {
    0/$c_0$/1, 1/$c_1$/3,
    %2/$c_2$/3, 3/$c_3$/4,
    4/$c_{j-1}$/3, 5/$c_j$/2,
    6/$c_{j+1}$/4,
    %7/$c_7$/0, 8/$c_8$/0,
    9/$c_{r-1}$/4}
{
    % \node[vertex] (c\i) at ({81-36*\i}:2.2) {};
    \node[vertex] (c\i) at ({81-36*\i}:2.5) {};
    \node [shift=({81-36*\i}:-0.48)] at (c\i) {\name\;(\col)};
}

% Aristas a_i-c_i y b_i-c_i
\foreach \i in {0,1,4,5,6,9}
% \foreach \i in {0,1,2,6,7,8}
{
    \pgfmathtruncatemacro{\a}{2*\i}
    \pgfmathtruncatemacro{\b}{2*\i+1}
    \draw (v\a)--(c\i)--(v\b);
}

% Aristas del matching M
\draw (c0)--(c5);
%\draw (c2)--(c3);
%\draw (c4)--(c5);
%\draw (c6)--(c7);
%\draw (c8)--(c9);

\end{tikzpicture}
        %\caption{$2r\equiv2\pmod3$, $2r=20$.}
        \caption{$ $}
        \label{fig:cubic-P0mod3}
    \end{subfigure}

    \caption{Situation in the proof
    of Theorem \ref{thm:ham-clawfree}. In this case, there exist an edge $c_0c_j$ with $j>0$. In parenthesis after each
    vertex label is shown the color assigned to such vertex. In $(a)$ we depict the case $|V(P)|\equiv 0\pmod3$.
    In $(b)$ we depict the case $|V(P)|\equiv 2\pmod3$.}
    \label{fig:cubic-notconsecutive}
\end{figure}

We may therefore assume that neither $|V(P)|$ nor $|V(Q)|$ is
congruent to $0$ modulo $3$. Since
$|V(P)|+|V(Q)|\not\equiv2\pmod3$, at least one of them is congruent
to $2$ modulo $3$. By symmetry, assume that
$|V(P)|\equiv2\pmod3$.
Then $\phi(b_{j-1})=1$. Moreover, $|V(Q)|\not\equiv0\pmod3$, and,
since $\overline{Q}$ is standard-colored,
$\phi(a_{j+1})\in\{0,1\}$.
We set $\phi(c_0)=1$ and $\phi(c_j)=2$. Again, since
$\phi(a_0)=3$, $\phi(b_0)=4$, and
$\phi(a_1)=\phi(b_{r-1})=0$, the coloring is proper and the
$3$-dynamic condition holds at the vertices of $T_0$.
Moreover,
\[
\phi(b_{j-1})=1,\qquad
\phi(a_j)=4,\qquad
\phi(b_j)=3,\qquad
\phi(a_{j+1})\in\{0,1\}.
\]
Thus the coloring is proper and the $3$-dynamic condition also holds
at the vertices of $T_j$. Therefore, $\phi$ is a $3$-dynamic
$5$-coloring of $F$ (Figure~\ref{fig:cubic-notconsecutive}b).
\end{proof}

\section{$2$-dynamic coloring of $C_p(1,3)$}

We use color words cyclically on the vertices
$0,1,\ldots,p-1$; as usual, $W^q$ denotes the concatenation of
$q$ copies of a word $W$.

\begin{theorem}\label{thm:circulant-r2}
Let $p\ge6$. Then
\[
\chi_2^d(C_p(1,3))=
\begin{cases}
3, & p\equiv0\pmod9,\\
4, & \text{otherwise}.
\end{cases}
\]
\end{theorem}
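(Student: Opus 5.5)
The plan is to prove the lower bound $\chi_2^d\ge 3$, then classify all $2$-dynamic $3$-colorings of $C_p(1,3)$ to get the case $9\mid p$, and finally prove the general upper bound $4$ by explicit colorings.

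\textbf{Lower bound $3$.} For $p\ge7$ the graph $C_p(1,3)$ is $4$-regular. For $p=6$ we have $i+3=i-3$, so it is cubic. In either case every vertex needs two colors in its neighborhood. With only two colors, a proper coloring gives all neighbors of $i$ the same color, namely the one different from $c(i)$. Hence $\chi_2^d\ge 3$.

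\textbf{Reduction to a $\{1,2\}$-word.} Suppose $c:\mathbb Z_p\to\mathbb Z_3$ is a $2$-dynamic $3$-coloring. Set $d(i)=c(i+1)-c(i)\in\{1,2\}$, which is possible by properness. The two constraints translate as follows.
\begin{itemize}
\item[(a)] $c(i+3)\ne c(i)$ holds if and only if $d(i)+d(i+1)+d(i+2)\not\equiv0\pmod 3$. Since each $d(j)\in\{1,2\}$, this says that no three consecutive $d$'s are equal.
\item[(b)] The colors of $i-1$ and $i+1$ coincide exactly when $d(i-1)\ne d(i)$. At such a position the $2$-dynamic condition requires $c(i-3)$ or $c(i+3)$ to differ from $c(i+1)$, and this is a linear condition on $d(i-3),\dots,d(i+2)$.
\end{itemize}
The core of the proof, and the step I expect to be the main obstacle, is a finite case analysis on windows of about six consecutive letters of the cyclic word $d$. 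The goal is to show that (a) and (b) force $d$ to be, up to rotation, $(112)^{p/3}$ or $(221)^{p/3}$. Concretely, I would first enumerate the admissible length-$6$ windows under (a), then discard those violating (b). I expect a pattern such as $1212$ to violate (b) at its middle position. The surviving windows should all extend uniquely to the period-$3$ pattern.

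\textbf{Conclusion of the $3$-color case.} If $d=(112)^{p/3}$, then $3\mid p$. The colors must close up around the cycle, so $\sum_i d(i)=4p/3\equiv0\pmod3$, i.e. $3\mid p/3$, hence $9\mid p$. The case $(221)^{p/3}$ is symmetric. Conversely, when $9\mid p$ the word $(012120201)^{p/9}$ realizes $d=(112)^{p/3}$. I would verify directly that it is proper (using $c(i)\ne c(i+3)$) and that every vertex sees two colors; for example, vertex $0$ sees colors $\{1,2\}$. So $\chi_2^d=3$ exactly when $9\mid p$. The case $p=6$ has a different neighborhood structure, and I would check it separately by hand.

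\textbf{Upper bound $4$.} For general $p$ I would build the color word by concatenating blocks $A=0123$ and $B=012$, or short variants such as $01230$ where needed. Inside $A^q$ we have $c(i+3)=c(i)+3\ne c(i)$, and both neighbors $i\pm1$ have distinct colors. The junctions $AB$ and $BA$ must be checked for properness at distance $1$ and $3$, and for the two-color condition. Since every $p\ge6$ is a nonnegative combination $4a+3b$ (apart from a few small values), this covers almost all $p$. Any remaining small $p$ I would settle with explicit words checked by hand.
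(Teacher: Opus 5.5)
\textbf{Lower bound.} Your lower-bound half is sound. It is essentially the paper's argument in a different encoding. The paper's indicator $s_i$ (equal to $1$ iff $\phi(i-1)\neq\phi(i+1)$) is exactly the condition $d(i-1)=d(i)$ in your notation, and the paper's two observations are what your window analysis has to produce.

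The case analysis you left open does go through, and it is cleanest in terms of runs of $d$. By (a), every run has length $1$ or $2$. By (b), at the boundary between consecutive runs of lengths $\ell_j,\ell_{j+1}$ one needs precisely $(\ell_{j-1},\ell_j)=(2,1)$ or $(\ell_{j+1},\ell_{j+2})=(1,2)$. This forbids two consecutive runs of length $2$. Applied at the right end of a maximal block of length-$1$ runs (or anywhere, if all runs have length $1$), it also forbids two consecutive runs of length $1$. Hence $d$ is a rotation of $(112)^{p/3}$ or $(221)^{p/3}$. Your closure condition $\sum_i d(i)\equiv0\pmod 3$ then gives $9\mid p$ more directly than the paper's reconstruction. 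Keep your direct argument for $\chi_2^d\ge3$: for even $p$ the graph is bipartite, so $\chi\le\chi_2^d$ alone would not give it.

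\textbf{Upper bound: the gap.} The upper bound fails as proposed. Every block you propose begins with color $0$, and $B=012$ has length $3$. So whatever block follows a copy of $B$ puts a $0$ exactly three positions after the $0$ that begins $B$. Since $i$ and $i+3$ are adjacent in $C_p(1,3)$, the junctions $BA$ and $BB$ are both improper; only $AB$ is fine. Hence every cyclic word containing $B$ is improper. The variant $01230$ fails even at distance $1$, since it ends in $0$ and the next block starts with $0$. So the decomposition $p=4a+3b$ yields nothing beyond $A^{p/4}$, and $\chi_2^d\le4$ remains unproved for every $p$ with $4\nmid p$ and $9\nmid p$.

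The paper avoids gluing short blocks. It writes $p=4q+r$ and uses the words $A^q$, $A^{q-1}03231$, $A^q01$, $A^q201$ for $r=0,1,2,3$, each verified locally against copies of $A$ on both sides. If you prefer to keep your scheme, $B=201$ does fit between two copies of $A$. However, $BB$ is still improper, so the $B$'s must be separated by $A$'s. This forces $b\le a$ and leaves $p\in\{6,10,13,17\}$ to be handled by explicit words.
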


\begin{proof}
If $9\mid p$, the word
$
(012120201)^{p/9}
$
defines a $2$-dynamic $3$-coloring: each vertex has a color different
from its neighbors at distances $1$ and $3$, and its neighborhood
contains both other colors. Hence,
$\chi_2^d(C_p(1,3))=3$ by
Equation~\ref{eq:absolute-bounds}.

We next give a $2$-dynamic $4$-coloring for every $p\ge6$.
Let $A=0123$ and write $p=4q+r$, $0\le r\le3$. Use the following
cyclic color words:
\[
\begin{array}{c|cccc}
r&0&1&2&3\\ \hline
\phi&A^q&A^{q-1}03231&A^q01&A^q201
\end{array}
\]
A direct verification shows that $\phi$ is a $2$-dynamic coloring.
Thus $\chi_2^d(C_p(1,3))\le4$ for every $p\ge6$.

It remains to prove that a $2$-dynamic $3$-coloring can exist only if
$9\mid p$. We may assume $p\ge7$, since
$C_6(1,3)\cong K_{3,3}$ has no $2$-dynamic $3$-coloring.
Let $\phi$ be a $2$-dynamic $3$-coloring and define
\[
s_i=
\begin{cases}
1,&\phi(i-1)\neq\phi(i+1),\\
0,&\phi(i-1)=\phi(i+1).
\end{cases}
\]
We first observe that $s_i=s_{i+1}=1$ is impossible. Indeed, in this
case both
$
\phi(i-1),\phi(i),\phi(i+1)
$
and
$
\phi(i),\phi(i+1),\phi(i+2)
$
contain the three colors. Hence $\phi(i-1)=\phi(i+2)$, contradicting
the fact that $i-1$ and $i+2$ are adjacent.

Moreover,
$
s_{i-2}=s_i=s_{i+2}=0
$
is impossible, since it would imply
$
\phi(i-3)=\phi(i-1)=\phi(i+1)=\phi(i+3),
$
contrary to the $2$-dynamic condition at $i$.

Since not all $s_i$ can be zero, choose $i$ with $s_i=1$.
Then $s_{i-1}=s_{i+1}=0$, and the previous observation forces
$s_{i-3}=s_{i+3}=1$. By the first observation,
$s_{i-2}=s_{i+2}=0$.
Reconstructing $\phi$ yields $\phi(i-4)=\phi(i+5)$.
Repeating the same argument gives
\[
\phi(i-4)=\phi(i-4+9k)
\]
for every $k$, and hence $p\equiv0\pmod9$.
\end{proof}
\section{$3$-dynamic coloring of $C_p(1,3)$}

\begin{theorem}\label{thm:circulant-r3}
Let $p\ge12$. Then
\[
\chi_3^d(C_p(1,3))=
\begin{cases}
4, & p\equiv0\pmod{12},\\
5, & \text{otherwise}.
\end{cases}
\]
\end{theorem}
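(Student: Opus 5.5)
The plan is to prove a lower bound of $4$, to show that $4$ colors suffice exactly when $12\mid p$, and to give $5$-colorings for all other $p\ge12$.

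For the lower bound I use that $C_p(1,3)$ is $4$-regular when $p\ge12$. In any $3$-dynamic coloring, vertex $i$ must see at least three distinct colors on $i\pm1,i\pm3$, and none of these equals $\phi(i)$. Hence $\chi_3^d\ge4$.

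\textbf{Four colors when $12\mid p$.} I exhibit a period-$12$ cyclic word $W$ over $\{0,1,2,3\}$ and use $W^{p/12}$. To find $W$, I exploit the rigidity of a $4$-coloring: the multiset $\{\phi(i-3),\phi(i-1),\phi(i+1),\phi(i+3)\}$ must be exactly the three colors other than $\phi(i)$, with one repetition. The search for $W$ is organised by the same bookkeeping used for the necessity direction below. A candidate is then checked directly, as in Theorem~\ref{thm:circulant-r2}: properness along differences $1$ and $3$, and three colors in each neighborhood.

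\textbf{Necessity of $12\mid p$ (the main step).} I imitate the proof of Theorem~\ref{thm:circulant-r2}. Let $\phi$ be a $3$-dynamic $4$-coloring. I record, for each $i$, which pair among $i-3,i-1,i+1,i+3$ carries the repeated color. Since $i-1,i+1$ are at distance $2$, they may coincide; but $\phi(i-3)\ne\phi(i)$ and $\phi(i-1)\ne\phi(i+2)$, and so on, because these pairs are adjacent.

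Next I derive local exclusions: two consecutive vertices cannot have their repeated pair of the same type, and some triples of types are impossible, exactly as $s_i=s_{i+1}=1$ was excluded before. These exclusions should force the sequence of types to be periodic. From that I get $\phi(i)=\phi(i+12)$ for all $i$, and hence the color sequence has period dividing $12$. Finally, a direct check that no period $d\in\{1,2,3,4,6\}$ (or any proper divisor) works gives $12\mid p$.

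The delicate part is the case analysis of the types. If it becomes too tangled, I will fall back on a cleaner invariant: each window of $4$ consecutive vertices $i,i+1,i+2,i+3$ contains $i$ and $i+3$, which are adjacent, and $i,i+1$ and $i+2,i+3$, which are also adjacent. I then track how the color pattern propagates from window to window. The target is a transfer argument showing that the $4$-window pattern returns after exactly $12$ steps and never earlier.

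\textbf{Five colors for the remaining $p$.} I look for two or three blocks that can be freely concatenated, for example words $A$ of length $5$ and $B$ of length $6$ over $\{0,\dots,4\}$. They should be chosen so that every junction $AA,AB,BA,BB$ is valid, which is checked in a window of radius $3$ around each seam. Since every $p\ge20$ can be written as $5a+6b$ with $a,b\ge0$, this covers all large $p$. The values $13\le p\le19$, excluding $12$, are handled by explicit cyclic words listed in a table, as in Theorem~\ref{thm:circulant-r2}. With the lower bounds above, this gives the stated values.
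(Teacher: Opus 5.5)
Your lower bound and your closing $\gcd$ step, which rules out periods $1,2,3,4,6$, are fine. But the three parts that carry the proof are only announced, and the one justification you give is wrong.

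\textbf{Necessity.} For $p\ge12$, no two of $i-3,i-1,i+1,i+3$ are adjacent, since their differences are $2,4,6$. So properness excludes none of the six possible repeated pairs. The pairs you cite, $\{i-3,i\}$ and $\{i-1,i+2\}$, are edges, but they are not pairs inside $N(i)$. The missing key lemma is that the repeated pair must be $\{i-3,i-1\}$, $\{i-1,i+1\}$ or $\{i+1,i+3\}$. Proving it needs the dynamic condition at a second vertex.
\begin{itemize}
\item If $\phi(i-3)=\phi(i+1)$, then $i+2$ is adjacent to $i-1,i+1,i+3$ and is forced to take $\phi(i)$. So $i-1$ sees $\phi(i)$ twice and must see $\phi(i-3)$ on $i-4$ or $i-2$, both of which are adjacent to $i-3$. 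The case $\phi(i-1)=\phi(i+3)$ is symmetric.
\item If $\phi(i-3)=\phi(i+3)$, then $\phi(i-2)=\phi(i+2)=\phi(i)$, and $i-1$ sees at most two colors.
\end{itemize}
Only with this lemma do you get that, writing $s_i=1$ iff $\phi(i-1)=\phi(i+1)$, exactly one of $s_{i-2},s_i,s_{i+2}$ equals $1$. Combined with excluding $s_i=s_{i+1}=1$ (a reconstruction forcing $\phi(i+4)=\phi(i+5)$), this makes $s$ $3$-periodic.

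You then still need a reconstruction to get $\phi(j+12)=\phi(j)$. For example, set $u_k=\phi(3k)=\phi(3k+2)$ and $v_k=\phi(3k+1)$. Then $\{u_{k-1},u_k,u_{k+1},v_k\}$ and $\{v_{k-1},v_k,v_{k+1},u_k\}$ are each all four colors, and $v_k\ne v_{k+1}$ forces $u_{k+2}=v_k=u_{k-2}$. So ``these exclusions should force the sequence of types to be periodic'' is the whole argument, not a step. Your $4$-window fallback cannot work as described: the condition at $i$ involves all of $i-3,\dots,i+3$, and $4$-windows record only properness.

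\textbf{Constructions.} You never exhibit the period-$12$ word or any $5$-coloring, and the scheme you suggest cannot exist. Suppose $BB$ were a valid junction for a block $B$ of length $6$. Every vertex of $B^3$ (needed for $p=18$) is checked inside some $BB$, so $B^3$ would be a $3$-dynamic $5$-coloring of $C_{18}(1,3)$ with period $6$. Period $6$ gives $\phi(i-3)=\phi(i+3)$, so $\phi(i-1),\phi(i+1),\phi(i+3)$ must be pairwise distinct for every $i$. That is a $3$-dynamic coloring of $C_6(1,3)\cong K_{3,3}$, which needs $6$ colors. Separately, seam-by-seam checks are not enough for a length-$5$ block: in $X\,01234\,Y$, the vertex colored $2$ has its $\pm3$-neighbors in $X$ and in $Y$.

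The paper avoids free concatenation. It uses $(010232101323)^{p/12}$ when $12\mid p$. Otherwise, with $A=01234$, it uses $A^q$, $A^{q-1}102431$, $A^q01$, $A^{q-1}01314203$, $A^q1203$ according to $p\bmod5$, so only one irregular block ever occurs.
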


\begin{proof}
Suppose first that $12\mid p$. The cyclic color word
$
(010232101323)^{p/12}
$
defines a proper $3$-dynamic $4$-coloring. Hence
$\chi_3^d(C_p(1,3))=4$ by
Equation~\ref{eq:absolute-bounds}.

We next give a $3$-dynamic $5$-coloring for every $p\ge12$.
Let $A=01234$ and write $p=5q+r$, where $0\le r\le4$.
Consider the following cyclic color words:
\[
\begin{array}{c|ccccc}
r&0&1&2&3&4\\ \hline
\phi&
A^q&
A^{q-1}102431&
A^q01&
A^{q-1}01314203&
A^q1203
\end{array}
\]
A direct verification shows that each word defines a proper
$3$-dynamic $5$-coloring. Thus
$
\chi_3^d(C_p(1,3))\le5
$
for every $p\ge12$.

It remains to show that a $3$-dynamic $4$-coloring can exist only if
$12\mid p$. Let $\phi$ be such a coloring. Since $\phi$ is proper,
the neighbors of every vertex use exactly the three colors different
from the color of the vertex. Thus exactly two of its four neighbors
receive the same color.

Define
\[
s_i=
\begin{cases}
1,&\phi(i-1)=\phi(i+1),\\
0,&\phi(i-1)\neq\phi(i+1).
\end{cases}
\]
We first observe that the repeated color in $N(i)$ can only occur on
one of the pairs
$
\{i-3,i-1\},\qquad
\{i-1,i+1\},\qquad
\{i+1,i+3\}.
$
Indeed, if $\phi(i-3)=\phi(i+1)$, then properness forces
$\phi(i+2)=\phi(i)$, and the $3$-dynamic condition at $i-1$ forces
one of $i-4,i-2$ to receive the color of $i-3$, contradicting
properness. The case $\phi(i-1)=\phi(i+3)$ is symmetric. If
$\phi(i-3)=\phi(i+3)$, then properness forces
$
\phi(i-2)=\phi(i+2)=\phi(i),
$
contradicting the $3$-dynamic condition at $i-1$.

Consequently, for every $i$, exactly one of
\[
s_{i-2},\ s_i,\ s_{i+2}
\]
is equal to $1$. Moreover, two consecutive $s_i$ cannot both be $1$.
Indeed, if $s_i=s_{i+1}=1$, a direct reconstruction using the
$3$-dynamic condition around $i$ and $i+1$ forces
$\phi(i+4)=\phi(i+5)$, contradicting properness.

Choose $i$ with $s_i=1$. Then by the previous
observations, $s_{i-6}=s_{i+6}=s_{i-3}=s_{i+3}=1$ and every other $s_k$ between $i-6$ and $i+6$ equals 0.
This implies $3\mid p$.
Reconstructing $\phi$ yields
$\phi(i-6)=\phi(i+6)$.
Repeating the same argument gives
\[
\phi(i-6)=\phi(i-6+12k) \neq \phi(i)
\]
for every $k$, and hence, as $p\equiv0\pmod {12}$.
\end{proof}
\section{$4$-dynamic coloring of $C_p(1,3)$}

\begin{theorem}\label{thm:circulant-r4}
Let $p\ge12$. Then
\[
\chi_4^d(C_p(1,3))=
\begin{cases}
5, & p\equiv0\pmod5,\\
8, & p=16,\\
7, & p\in\{13,14,19,21,26,31\},\\
6, & \text{otherwise}.
\end{cases}
\]
\end{theorem}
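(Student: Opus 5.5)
Since $C_p(1,3)$ is $4$-regular for $p\ge12$, Equation~\eqref{eq:regular-square} reduces the statement to computing $\chi(G^2)$ with $G^2=C_p(1,2,3,4,6)$. The plan is to prove a lower bound, then give explicit cyclic colour words for the upper bounds, and finally treat the exceptional values of $p$ separately.

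\textbf{Lower bound $5$ and the case $5\mid p$.} Every window $\{i,i+1,i+2,i+3,i+4\}$ is a clique in $C_p(1,2,3,4,6)$, because all differences lie in $\{1,2,3,4\}$. Hence $\chi\ge5$. In a $5$-colouring, each window of five consecutive vertices uses all five colours. Comparing the windows starting at $i$ and $i+1$ gives $\phi(i+5)=\phi(i)$, so the colouring is $5$-periodic and $5\mid p$. Conversely, when $5\mid p$ the word $(01234)^{p/5}$ works. Here vertices at distance $6\equiv1\pmod5$ get different colours, and no difference in $\{1,2,3,4,6\}$ is a multiple of $5$ once $p\ge12$ (since $p-6\ge6$ is excluded as well). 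So $\chi_4^d=5$ exactly when $5\mid p$.

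\textbf{Upper bound $6$ in general.} Write $p=5a+6b$ with $a,b\ge0$; this is possible for all $p\ge20$ by the Frobenius bound for $5$ and $6$. Try the concatenation $(01234)^a(012345)^b$, or a variant such as $B=012345$ with rotated blocks. The thing to check is that the difference-$6$ adjacency and the block junctions are respected. A block of length $6$ forces $\phi(i)=\phi(i+6)$ at distance $6$, so pure $6$-blocks fail. This suggests using building blocks of length $5$ and $7$ (for example $(01234)^a$ combined with a $7$-word such as $0123456$ minus one colour, i.e.\ $0123405$ or similar) on the $6$ colours, and then finding a small family of junction words. Concretely, as in the proofs of Theorems~\ref{thm:circulant-r2} and~\ref{thm:circulant-r3}, I would write $p=5q+r$ and give for each $r\in\{1,2,3,4\}$ a tail word $A^{q-k}W_r$ with $W_r$ a short $6$-colour word. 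I would verify properness directly on the distances $1,2,3,4,6$ across the junctions, and check each $W_r$ for the smallest admissible $p$ in its residue class. The small $p$ not covered by such a tail (including $p\in\{12,17,18,22,\dots\}$ apart from the exceptions) get individual words.

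\textbf{Exceptional values; main obstacle.} The hard part is the lower bounds $\chi\ge7$ for $p\in\{13,14,19,21,26,31\}$ and $\chi\ge8$ for $p=16$. My plan is a structural analysis of $6$-colourings. Every $5$-window is a rainbow clique, so consecutive windows determine the colouring up to a sequence of "shifts": $\phi(i+5)$ is either $\phi(i)$ or the sixth colour missing from window $\{i+1,\dots,i+4\}\cup\{i\}$. The distance-$6$ edges then constrain which patterns are allowed. I expect this to show that a $6$-colouring is a cyclic concatenation of blocks of lengths $5$ and $7$ (or a similar small set), so $p$ must be a nonnegative combination avoiding exactly the listed values.

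For $p=16$, one can use a counting bound: $\alpha(C_{16}(1,2,3,4,6))\le2$, since any three vertices in $\mathbb Z_{16}$ with pairwise cyclic distances in $\{5,7,8\}$ would need distances summing to $16$, which is impossible. Hence $\chi\ge\lceil16/2\rceil=8$. Similar independence-number arguments ($\alpha\le2$ for $p=13,14$, giving $\chi\ge7$) handle the smallest cases. The cases $19,21,26,31$ need the block-structure argument; $31$ in particular will be the most delicate and may require careful case checking. The matching $7$-colour upper bounds for these $p$ and the $8$-colour bound for $p=16$ are then given by explicit words.
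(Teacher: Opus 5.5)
There is a genuine gap in the lower bounds. Your reduction to $\chi(C_p(1,2,3,4,6))$, the $5$-window clique argument characterizing $5$-colourings, and the bounds $\alpha\le 2$ for $p\in\{13,14,16\}$ are correct and agree with the paper. The gap is the lower bound $\chi\ge 7$ for $p\in\{19,21,26,31\}$. You defer it to a block-structure analysis that you never carry out. Worse, the conclusion you expect from that analysis cannot be right: $14=7+7$, $19=5+7+7$, $21=7+7+7$ and $26=5+7+7+7$ are all nonnegative combinations of $5$ and $7$, yet none of these $p$ admits a $6$-colouring.

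The missing observation is that the gap counting you already used for $p=16$ settles every case.
\begin{itemize}
\item The cyclic gaps $g_1,\dots,g_t$ between consecutive vertices of a colour class sum to $p$, and each lies in $\{5\}\cup\{7,8,\dots\}$. Hence $\alpha\le\lfloor p/5\rfloor$.
\item If $p=5q+1$, then $\alpha\le q-1$. Indeed, $q$ gaps would give $\sum_j(g_j-5)=1$ with every summand equal to $0$ or at least $2$, which is impossible.
\item This gives $\alpha\le 3,3,4,5$ for $p=19,21,26,31$, and therefore $\chi\ge\lceil p/\alpha\rceil=7$ in each case.
\end{itemize}
No structural case analysis is needed, and $p=31$ is no more delicate than the others.

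The upper bounds are also left as a plan. You give no explicit $6$-colouring words. Free concatenation of $5$- and $7$-blocks cannot be the general mechanism, for the same reason as above. For instance, $(01234)^k0123405$ already puts colour $0$ at cyclic distance $2$ across the wrap. What you need is a short list of tail words that glue to $A^k$, where $A=01234$, so that every colour class has all cyclic gaps equal to $5$ or at least $7$. The paper uses $B=012340521435$ and $C=012340153421504235$, and takes $A^{q-7}C^2$, $A^{q-2}B$, $A^{q-3}C$ and $A^{q-4}B^2$ for $r=1,2,3,4$ respectively. This covers every $p\ge 12$ with $5\nmid p$ except precisely $13,14,16,19,21,26,31$. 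The matching $7$- and $8$-colourings for those values are easy, e.g.\ $(0123456)^2$, $(01234567)^2$ and $(0123456)^2\,01234$, but they still have to be exhibited.
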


\begin{proof}
Since $C_p(1,3)$ is $4$-regular, a proper coloring is $4$-dynamic
if and only if vertices at distance at most two receive different
colors. Hence
\[
\chi_4^d(C_p(1,3))=\chi(H_p),
\]
where
$H_p=C_p(1,2,3,4,6)$.
For every $i$, the set
$
\{i,i+1,i+2,i+3,i+4\}
$
induces a $K_5$ in $H_p$. Thus $\chi(H_p)\ge5$. Moreover, in any
$5$-coloring, two consecutive such cliques force
$
\phi(i+5)=\phi(i)
$
for every $i$. Therefore a $5$-coloring can exist only if $5\mid p$.
Consequently,
$
\chi(H_p)\ge6
$
whenever $5\nmid p$.

We need stronger lower bounds only for a few small values of $p$.
Let $S$ be an independent set of $H_p$, and let
$g_1,\ldots,g_t$ be the cyclic distances between consecutive vertices of
$S$. Since the distances $1,2,3,4,6$ are forbidden, every $g_j$ is
either $5$ or at least $7$. Hence
\[
\alpha(H_p)\le\left\lfloor\frac p5\right\rfloor.
\]
Furthermore, if $p=5q+1$, then $\alpha(H_p)\le q-1$, since $q$
distances would require
$
1=\sum_{j=1}^q(g_j-5),
$
where every summand is either $0$ or at least $2$.
Using
$
\chi(H_p)\ge
\left\lceil\frac{p}{\alpha(H_p)}\right\rceil,
$
we obtain
$
\chi(H_{16})\ge8
$
and
$
\chi(H_p)\ge7
\qquad
\text{for }p\in\{13,14,19,21,26,31\}.
$

It remains to give matching colorings. Let
\[
A=01234,\qquad
B=012340521435,\qquad
C=012340153421504235.
\]
If $p=5q$, then $A^q$ is a proper $5$-coloring of $H_p$.

For the nonexceptional values with $5\nmid p$, write $p=5q+r$.
The following cyclic color words use six colors:
\[
\begin{array}{c|c|c}
r & \text{condition} & \phi\\ \hline
1 & q\ge7 & A^{q-7}C^2\\
2 & q\ge2 & A^{q-2}B\\
3 & q\ge3 & A^{q-3}C\\
4 & q\ge4 & A^{q-4}B^2.
\end{array}
\]
A direct verification shows that each word is a proper coloring of
$H_p$.

Finally, let
\[
D=0123456,\qquad E=01234567.
\]
The exceptional cases are covered by
\[
\begin{array}{c|c}
p&\phi\\ \hline
13&D201534\\
14&D^2\\
16&E^2\\
19&D^2A\\
21&D^3\\
26&D^3A\\
31&D^4\,234.
\end{array}
\]
Again, a direct verification shows that these are proper colorings of
$H_p$. They use seven colors, except for $p=16$, where eight colors
are used. Together with the lower bounds above, the result follows.
\end{proof}

\section{Concluding remarks}

In this paper, we investigated \(r\)-dynamic colorings of regular graphs for claw-free cubic graphs and circulant graphs. 
For cubic graphs, 3-dynamic coloring coincides with coloring the square of the graph. This connection allows the problem to be viewed from the perspective of square coloring while exploiting the strong structural properties of claw-free cubic graphs. We proved that every claw-free cubic graph admits a 3-dynamic coloring with at most six colors, and we identified the extremal cases showing that this bound is best possible. For Hamiltonian claw-free cubic graphs, the additional structure permits a stronger bound of five colors, apart from the exceptional graphs \(C_3\square K_2\), \(X_{12}\), \(X_{16}^{1}\), and \(X_{16}^{2}\).

The circulant case illustrates a substantially different phenomenon. Here the parameter \(r\) is smaller than the degree, and consequently \(r\)-dynamic coloring no longer reduces to coloring a graph square. For the circulants considered in this work, this distinction allows considerably more economical colorings and leads to exact results depending on the arithmetic structure of the underlying cycle.

Several questions remain open. A natural starting point is the general situation for \(r\)-regular graphs. Since an \(r\)-dynamic coloring of an \(r\)-regular graph is precisely a proper coloring of its square, one always has
$
r+1\leq \chi_r^d(G)=\chi(G^2)\leq r^2+1.
$
The upper bound follows immediately from \(\Delta(G^2)\leq r^2\), and it is best possible in general, as shown by the classical Moore graph examples. Thus, even in the regular setting, the possible range for the \(r\)-dynamic chromatic number is quite large.

For cubic graphs, this general estimate gives
$
4\leq \chi_3^d(G)\leq 10,
$
and the upper bound cannot be improved over the class of all cubic graphs, since the Petersen graph satisfies \(\chi_3^d(G)=10\). From this perspective, the bounds obtained here show that forbidding an induced claw imposes a substantial restriction on the square coloring problem: every claw-free cubic graph in our setting requires at most six colors, and under the additional Hamiltonicity assumption this can be reduced further to five, apart from the exceptional configurations identified above.
It would therefore be interesting to determine which structural conditions on cubic or, more generally, regular graphs force the \(r\)-dynamic chromatic number to lie substantially below the general bound \(r^2+1\). 
\newpage
\section{Appendix: Proof of Lemma \ref{lem:small-diamond-insertions}.}

%\begin{lemma}\label{lem:small-diamond-insertions}
%If $G$ is obtained from $C_3\square K_2$ by inserting one diamond, or from $X_{12}$ by inserting two diamonds into edges corresponding to the underlying cubic multigraph, then $ \chi^d_3(G)\leq5.$
%\end{lemma}

\begin{proof}
For the first part, 
%the automorphisms of the triangular prism act transitively on the three edges corresponding to the edges of the underlying cubic multigraph. Thus,
let the two triangles of the prism be
$a_0b_0c_0$ and $a_1b_1c_1$, and suppose without loss of generality that $c_0c_1$ is replaced
by a diamond. Let $p$ and $q$ be the degree-$2$ vertices of the diamond,
where $p$ is adjacent to $c_0$ and $q$ is adjacent to $c_1$, and let
$x$ and $y$ be its other two vertices. Define
$
\phi(c_0)=\phi(c_1)=0,\quad
\phi(a_0)=\phi(q)=1,\quad
\phi(b_0)=\phi(x)=2,
$
and
$
\phi(a_1)=\phi(p)=3,\qquad
\phi(b_1)=\phi(y)=4.
$
A direct check shows that $\phi$ is proper and that the three neighbors
of every vertex receive pairwise distinct colors. Hence $\phi$ is a
$3$-dynamic $5$-coloring.

For the second part, let
the four triangles of $X_{12}$ be 
$\{a_ib_ic_i: i\in \{0,1,2,3\}\}$ and
$
e_0=b_0a_1,\quad e_1=b_1a_2,\quad
e_2=b_2a_3,\quad e_3=b_3a_0,\quad
e_4=c_0c_1,\quad e_5=c_2c_3
$
be the their edges outside triangles. Up to automorphisms of $X_{12}$, the
possible placements of two diamonds are
$
(e_0,e_0),\quad (e_0,e_1),\quad (e_0,e_2),\quad
(e_0,e_4),\quad (e_1,e_1),\quad (e_1,e_3).
$
%Indeed, these correspond, respectively, to inserting both diamonds into the same edge belonging to a parallel pair of the underlying multigraph; into one such edge and one simple edge; into edges belonging to different parallel pairs; into the two edges of the same parallel pair; twice into the same simple edge; and into the two distinct simple edges.

%Figure~\ref{fig:X12-two-diamonds} gives a $5$-coloring for each of these six configurations. In every case the coloring is proper and the three neighbors of each vertex receive pairwise distinct colors. Therefore, each coloring is $3$-dynamic, and every graph obtained from $X_{12}$ by inserting two diamonds admits a $3$-dynamic $5$-coloring.
There are six configurations, shown in
Figure~\ref{fig:X12-two-diamonds}; the displayed colorings verify the claim.
\end{proof}

% En el preámbulo:
% \usepackage{tikz,subcaption}

% ================================================================
% Helpers
% ================================================================

\def\Xcfdiamond#1#2#3#4#5{%
  \node[Xcf vertex,
    label={[Xcf label]#4:{$#5$}}]
  (#1) at (#2,#3) {};
}

\tikzset{
  Xcf vertex/.style={
    circle,
    fill=black,
    inner sep=1.35pt,
    outer sep=0pt
  },
  Xcf label/.style={
    font=\scriptsize,
    inner sep=1pt,
    fill=white,
    text=black
  },
  Xcf picture/.style={
    x=.78cm,
    y=.78cm,
    line width=.45pt,
    label distance=2pt
  }
}

% Coordinates and edges common to all six panels.
\def\Xcfbase{%
  \path[use as bounding box]
    (-3.5,-2.65) rectangle (3.5,2.65);

  \foreach \n/\x/\y in {
    a0/-2.4/1.65,
    b0/-1.6/1.65,
    c0/-2/.65,
    a1/1.6/1.65,
    b1/2.4/1.65,
    c1/2/.65,
    a2/2.4/-1.65,
    b2/1.6/-1.65,
    c2/2/-.65,
    a3/-1.6/-1.65,
    b3/-2.4/-1.65,
    c3/-2/-.65
  }{
    \coordinate (\n) at (\x,\y);
  }

  \foreach \i in {0,...,3}{
    \draw (a\i)--(b\i)--(c\i)--cycle;
  }
}

% Draw each base vertex with only its color.
\def\Xcfcolors#1{%
  \foreach \n/\ang/\col in {#1}{
    \node[Xcf vertex,
      label={[Xcf label]\ang:{$\col$}}]
    at (\n) {};
  }
}

\begin{figure*}[h]
% Disable externalization locally, if the external library is loaded.
\ifdefined\tikzexternaldisable
  \tikzexternaldisable
\fi
\centering

% ================================================================
% (a) (e_0,e_0)
% ================================================================
\begin{subfigure}[t]{0.32\textwidth}
\centering

\begin{tikzpicture}[Xcf picture]
\Xcfbase

\foreach \n/\x/\y in {
  p01/-1.35/1.65,
  q01/-.25/1.65,
  x01/-.8/2.08,
  y01/-.8/1.22,
  p02/.25/1.65,
  q02/1.35/1.65,
  x02/.8/2.08,
  y02/.8/1.22
}{
  \coordinate (\n) at (\x,\y);
}

\draw (b0)--(p01);
\draw (p01)--(x01)--(q01)--(y01)--cycle;
\draw (x01)--(y01);
\draw (q01)--(p02);
\draw (p02)--(x02)--(q02)--(y02)--cycle;
\draw (x02)--(y02);
\draw (q02)--(a1);

\draw (b1)--(a2);
\draw (b2)--(a3);
\draw (b3)--(a0);
\draw (c0)--(c1);
\draw (c2)--(c3);

\Xcfcolors{
  a0/180/0,
  b0/90/1,
  c0/270/2,
  a1/90/0,
  b1/0/1,
  c1/270/3,
  a2/0/4,
  b2/270/0,
  c2/90/2,
  a3/270/1,
  b3/180/4,
  c3/90/3
}

\Xcfdiamond{p01}{-1.35}{1.65}{270}{3}
\Xcfdiamond{q01}{-.25}{1.65}{270}{2}
\Xcfdiamond{x01}{-.8}{2.08}{90}{0}
\Xcfdiamond{y01}{-.8}{1.22}{270}{4}

\Xcfdiamond{p02}{.25}{1.65}{270}{1}
\Xcfdiamond{q02}{1.35}{1.65}{270}{2}
\Xcfdiamond{x02}{.8}{2.08}{90}{3}
\Xcfdiamond{y02}{.8}{1.22}{270}{4}

\end{tikzpicture}
\caption{$(e_0,e_0)$}
\end{subfigure}
\hfill
%
% ================================================================
% (b) (e_0,e_1)
% ================================================================
\begin{subfigure}[t]{0.32\textwidth}
\centering

\begin{tikzpicture}[Xcf picture]
\Xcfbase

\foreach \n/\x/\y in {
  p0/-.8/1.65,
  q0/.8/1.65,
  x0/0/2.16,
  y0/0/1.14,
  p1/2.4/.85,
  q1/2.4/-.85,
  x1/2.91/0,
  y1/1.89/0
}{
  \coordinate (\n) at (\x,\y);
}

\draw (b0)--(p0);
\draw (p0)--(x0)--(q0)--(y0)--cycle;
\draw (x0)--(y0);
\draw (q0)--(a1);

\draw (b1)--(p1);
\draw (p1)--(x1)--(q1)--(y1)--cycle;
\draw (x1)--(y1);
\draw (q1)--(a2);

\draw (b2)--(a3);
\draw (b3)--(a0);
\draw (c0)--(c1);
\draw (c2)--(c3);

\Xcfcolors{
  a0/180/0,
  b0/90/1,
  c0/270/2,
  a1/90/0,
  b1/0/4,
  c1/270/3,
  a2/0/3,
  b2/270/0,
  c2/90/2,
  a3/270/1,
  b3/180/3,
  c3/90/4
}

\Xcfdiamond{p0}{-.8}{1.65}{270}{3}
\Xcfdiamond{q0}{.8}{1.65}{270}{1}
\Xcfdiamond{x0}{0}{2.16}{90}{2}
\Xcfdiamond{y0}{0}{1.14}{270}{4}

\Xcfdiamond{p1}{2.4}{.85}{90}{1}
\Xcfdiamond{q1}{2.4}{-.85}{270}{4}
\Xcfdiamond{x1}{2.91}{0}{0}{0}
\Xcfdiamond{y1}{1.89}{0}{180}{2}

\end{tikzpicture}
\caption{$(e_0,e_1)$}
\end{subfigure}
\hfill
%
% ================================================================
% (c) (e_0,e_2)
% ================================================================
\begin{subfigure}[t]{0.32\textwidth}
\centering

\begin{tikzpicture}[Xcf picture]
\Xcfbase

\foreach \n/\x/\y in {
  p0/-.8/1.65,
  q0/.8/1.65,
  x0/0/2.16,
  y0/0/1.14,
  p2/.8/-1.65,
  q2/-.8/-1.65,
  x2/0/-2.16,
  y2/0/-1.14
}{
  \coordinate (\n) at (\x,\y);
}

\draw (b0)--(p0);
\draw (p0)--(x0)--(q0)--(y0)--cycle;
\draw (x0)--(y0);
\draw (q0)--(a1);

\draw (b2)--(p2);
\draw (p2)--(x2)--(q2)--(y2)--cycle;
\draw (x2)--(y2);
\draw (q2)--(a3);

\draw (b1)--(a2);
\draw (b3)--(a0);
\draw (c0)--(c1);
\draw (c2)--(c3);

\Xcfcolors{
  a0/180/0,
  b0/90/1,
  c0/270/2,
  a1/90/0,
  b1/0/4,
  c1/270/3,
  a2/0/1,
  b2/270/2,
  c2/90/0,
  a3/270/1,
  b3/180/3,
  c3/90/4
}

\Xcfdiamond{p0}{-.8}{1.65}{270}{3}
\Xcfdiamond{q0}{.8}{1.65}{270}{1}
\Xcfdiamond{x0}{0}{2.16}{90}{2}
\Xcfdiamond{y0}{0}{1.14}{270}{4}

\Xcfdiamond{p2}{.8}{-1.65}{90}{3}
\Xcfdiamond{q2}{-.8}{-1.65}{90}{2}
\Xcfdiamond{x2}{0}{-2.16}{270}{0}
\Xcfdiamond{y2}{0}{-1.14}{90}{4}

\end{tikzpicture}
\caption{$(e_0,e_2)$}
\end{subfigure}

\par\vspace{1em}

% ================================================================
% (d) (e_0,e_4)
% ================================================================
\begin{subfigure}[t]{0.32\textwidth}
\centering

\begin{tikzpicture}[Xcf picture]
\Xcfbase

\foreach \n/\x/\y in {
  p0/-.8/1.65,
  q0/.8/1.65,
  x0/0/2.16,
  y0/0/1.14,
  p4/-.8/.55,
  q4/.8/.55,
  x4/0/.16,
  y4/0/.94
}{
  \coordinate (\n) at (\x,\y);
}

\draw (b0)--(p0);
\draw (p0)--(x0)--(q0)--(y0)--cycle;
\draw (x0)--(y0);
\draw (q0)--(a1);

\draw (c0)--(p4);
\draw (p4)--(x4)--(q4)--(y4)--cycle;
\draw (x4)--(y4);
\draw (q4)--(c1);

\draw (b1)--(a2);
\draw (b2)--(a3);
\draw (b3)--(a0);
\draw (c2)--(c3);

\Xcfcolors{
  a0/180/0,
  b0/90/1,
  c0/270/2,
  a1/90/0,
  b1/0/4,
  c1/270/2,
  a2/0/3,
  b2/270/0,
  c2/90/2,
  a3/270/1,
  b3/180/3,
  c3/90/4
}

\Xcfdiamond{p0}{-.8}{1.65}{270}{3}
\Xcfdiamond{q0}{.8}{1.65}{270}{1}
\Xcfdiamond{x0}{0}{2.16}{90}{2}
\Xcfdiamond{y0}{0}{1.14}{180}{4}

\Xcfdiamond{p4}{-.8}{.55}{180}{3}
\Xcfdiamond{q4}{.8}{.55}{0}{1}
\Xcfdiamond{x4}{0}{.16}{270}{0}
\Xcfdiamond{y4}{0}{.94}{0}{4}

\end{tikzpicture}
\caption{$(e_0,e_4)$}
\end{subfigure}
\hfill
%
% ================================================================
% (e) (e_1,e_1)
% ================================================================
\begin{subfigure}[t]{0.32\textwidth}
\centering

\begin{tikzpicture}[Xcf picture]
\Xcfbase

\foreach \n/\x/\y in {
  p11/2.75/1.2,
  q11/2.75/.4,
  x11/3.18/.8,
  y11/2.32/.8,
  p12/2.75/-.4,
  q12/2.75/-1.2,
  x12/3.18/-.8,
  y12/2.32/-.8
}{
  \coordinate (\n) at (\x,\y);
}

\draw (b1)--(p11);
\draw (p11)--(x11)--(q11)--(y11)--cycle;
\draw (x11)--(y11);
\draw (q11)--(p12);
\draw (p12)--(x12)--(q12)--(y12)--cycle;
\draw (x12)--(y12);
\draw (q12)--(a2);

\draw (b0)--(a1);
\draw (b2)--(a3);
\draw (b3)--(a0);
\draw (c0)--(c1);
\draw (c2)--(c3);

\Xcfcolors{
  a0/180/0,
  b0/90/2,
  c0/270/3,
  a1/90/1,
  b1/0/0,
  c1/270/4,
  a2/0/1,
  b2/270/0,
  c2/90/4,
  a3/270/2,
  b3/180/1,
  c3/90/3
}

\Xcfdiamond{p11}{2.75}{1.2}{0}{2}
\Xcfdiamond{q11}{2.75}{.4}{0}{0}
\Xcfdiamond{x11}{3.18}{.8}{0}{3}
\Xcfdiamond{y11}{2.32}{.8}{90}{4}

\Xcfdiamond{p12}{2.75}{-.4}{0}{1}
\Xcfdiamond{q12}{2.75}{-1.2}{0}{2}
\Xcfdiamond{x12}{3.18}{-.8}{0}{3}
\Xcfdiamond{y12}{2.32}{-.8}{270}{4}

\end{tikzpicture}
\caption{$(e_1,e_1)$}
\end{subfigure}
\hfill
%
% ================================================================
% (f) (e_1,e_3)
% ================================================================
\begin{subfigure}[t]{0.32\textwidth}
\centering

\begin{tikzpicture}[Xcf picture]
\Xcfbase

\foreach \n/\x/\y in {
  p1/2.4/.85,
  q1/2.4/-.85,
  x1/2.91/0,
  y1/1.89/0,
  p3/-2.4/-.85,
  q3/-2.4/.85,
  x3/-2.91/0,
  y3/-1.89/0
}{
  \coordinate (\n) at (\x,\y);
}

\draw (b1)--(p1);
\draw (p1)--(x1)--(q1)--(y1)--cycle;
\draw (x1)--(y1);
\draw (q1)--(a2);

\draw (b3)--(p3);
\draw (p3)--(x3)--(q3)--(y3)--cycle;
\draw (x3)--(y3);
\draw (q3)--(a0);

\draw (b0)--(a1);
\draw (b2)--(a3);
\draw (c0)--(c1);
\draw (c2)--(c3);

\Xcfcolors{
  a0/180/0,
  b0/90/1,
  c0/270/2,
  a1/90/3,
  b1/0/0,
  c1/270/4,
  a2/0/0,
  b2/270/1,
  c2/90/2,
  a3/270/3,
  b3/180/0,
  c3/90/4
}

\Xcfdiamond{p1}{2.4}{.85}{90}{1}
\Xcfdiamond{q1}{2.4}{-.85}{270}{4}
\Xcfdiamond{x1}{2.91}{0}{0}{2}
\Xcfdiamond{y1}{1.89}{0}{180}{3}

\Xcfdiamond{p3}{-2.4}{-.85}{270}{1}
\Xcfdiamond{q3}{-2.4}{.85}{90}{3}
\Xcfdiamond{x3}{-2.91}{0}{180}{2}
\Xcfdiamond{y3}{-1.89}{0}{0}{4}

\end{tikzpicture}
\caption{$(e_1,e_3)$}
\end{subfigure}

\caption{The six configurations, up to symmetry, obtained from
$X_{12}$ by inserting two diamonds. The number shown next to each
vertex is its assigned color.}
\label{fig:X12-two-diamonds}

\end{figure*}

\newpage

\bibliographystyle{plain}
\bibliography{referencias}

@article{LiLai2017,
  author  = {Hao Li and Hong-Jian Lai},
  title   = {3-dynamic coloring and list 3-dynamic coloring of {$K_{1,3}$}-free graphs},
  journal = {Discrete Applied Mathematics},
  volume  = {222},
  pages   = {166--171},
  year    = {2017},
  doi     = {10.1016/j.dam.2017.01.006}
}

@article{KangPark2019,
  author  = {Kang, Sungsik and Park, Boram},
  title   = {On incidence choosability of cubic graphs},
  journal = {Discrete Mathematics},
  volume  = {342},
  number  = {6},
  pages   = {1828--1837},
  year    = {2019}
}

@book{diestel_graph_2017,
  author    = {Diestel, Reinhard},
  title     = {Graph Theory},
  edition   = {5th},
  series    = {Graduate Texts in Mathematics},
  volume    = {173},
  publisher = {Springer},
  address   = {Berlin, Heidelberg},
  year      = {2017},
  doi       = {10.1007/978-3-662-53622-3},
  url       = {https://doi.org/10.1007/978-3-662-53622-3}
}

@book{bondy_graph_2008,
  author    = {Bondy, J. A. and Murty, U. S. R.},
  title     = {Graph Theory},
  series    = {Graduate Texts in Mathematics},
  volume    = {244},
  publisher = {Springer},
  address   = {New York, NY, USA},
  year      = {2008},
  isbn      = {978-1-84628-969-9},
  doi       = {10.1007/978-1-84628-970-5},
  url       = {https://link.springer.com/book/9781846289699}
}

@article{CHEN202224,
  title    = {Graph r-hued colorings—A survey},
  journal  = {Discrete Applied Mathematics},
  volume   = {321},
  pages    = {24-48},
  year     = {2022},
  issn     = {0166-218X},
  doi      = {https://doi.org/10.1016/j.dam.2022.06.003},
  url      = {https://www.sciencedirect.com/science/article/pii/S0166218X22001871},
  author   = {Ye Chen and Suohai Fan and Hong-Jian Lai and Murong Xu}
}

@article{LAI20061997,
  title    = {Conditional colorings of graphs},
  journal  = {Discrete Mathematics},
  volume   = {306},
  number   = {16},
  pages    = {1997-2004},
  year     = {2006},
  issn     = {0012-365X},
  doi      = {https://doi.org/10.1016/j.disc.2006.03.052},
  url      = {https://www.sciencedirect.com/science/article/pii/S0012365X06002561},
  author   = {Hong-Jian Lai and Jianliang Lin and Bruce Montgomery and Taozhi Shui and Suohai Fan}
}

@phdthesis{montgomery2001dynamic,
  author = {Montgomery, Bruce},
  title  = {Dynamic coloring of graphs},
  school = {West Virginia University},
  year   = {2001},
  type   = {Graduate Theses, Dissertations, and Problem Reports},
  number = {1397},
  url    = {https://researchrepository.wvu.edu/etd/1397}
}

@article{Oum2011, 
title={Perfect Matchings in Claw-free Cubic Graphs}, volume={18}, 
url={https://www.combinatorics.org/ojs/index.php/eljc/article/view/v18i1p62},
number={1},
journal={The Electronic Journal of Combinatorics},
author={Oum, Sang-il},
year={2011},
month={Mar.},
pages={P62}}

@article{CranstonKim2008,
  author  = {Daniel W. Cranston and Seog-Jin Kim},
  title   = {List-coloring the square of a subcubic graph},
  journal = {Journal of Graph Theory},
  volume  = {57},
  number  = {1},
  pages   = {65--87},
  year    = {2008},
  doi     = {10.1002/jgt.20273}
}

@article{Thomassen2018,
  author  = {Carsten Thomassen},
  title   = {The square of a planar cubic graph is 7-colorable},
  journal = {Journal of Combinatorial Theory, Series B},
  volume  = {128},
  pages   = {192--218},
  year    = {2018},
  doi     = {10.1016/j.jctb.2017.08.010}
}

@article{CamesVanBatenburgKang2019,
  author  = {Wouter Cames van Batenburg and Ross J. Kang},
  title   = {Squared Chromatic Number Without Claws or Large Cliques},
  journal = {Canadian Mathematical Bulletin},
  volume  = {62},
  number  = {1},
  pages   = {23--35},
  year    = {2019},
  doi     = {10.4153/CMB-2018-024-5}
}

@article{DeJoannisKangPastor2019,
  author  = {R{\'e}mi de Joannis de Verclos and Ross J. Kang and Lucas Pastor},
  title   = {Colouring Squares of Claw-free Graphs},
  journal = {Canadian Journal of Mathematics},
  volume  = {71},
  number  = {1},
  pages   = {113--129},
  year    = {2019},
  doi     = {10.4153/CJM-2017-029-9}
}

@article{BowlerEtAl2017,
  author  = {Nathan Bowler and Joshua Erde and Florian Lehner and
             Martin Merker and Max Pitz and Konstantinos Stavropoulos},
  title   = {A counterexample to Montgomery's conjecture on dynamic
             colourings of regular graphs},
  journal = {Discrete Applied Mathematics},
  volume  = {229},
  pages   = {151--153},
  year    = {2017},
  doi     = {10.1016/j.dam.2017.05.004}
}

\end{document}